\newcommand{\msc}[2][2000]{%
  \let\@oldtitle\@title%
  \gdef\@title{\@oldtitle\footnotetext{#1 \emph{Mathematics subject
        classification.} #2}}%
}
\def\dis
\def\C{{\mathbb C}}
\def\R{{\mathbb R}}
\def\T{{\mathbb T}}
\def\H{{\mathcal H}}
\def\Sch{{\mathcal S}}
\def\O{\mathcal O}
\def\F{\mathcal F}
\def\({\left(}
\def\){\right)}
\def\<{\left\langle}
\def\>{\right\rangle}
\def\le{\leqslant}
\def\ge{\geqslant}
\def\Eq#1#2{\mathop{\sim}\limits_{#1\rightarrow#2}}
\def\Tend#1#2{\mathop{\longrightarrow}\limits_{#1\rightarrow#2}}
\def\d{{\partial}}
\def\eps{\varepsilon}
\def\si{{\sigma}}
\DeclareMathOperator{\RE}{Re}
\DeclareMathOperator{\IM}{Im}
\newcommand{\DD}{\mathcal D}
\newcommand{\EE}{\mathcal E}
\newcommand{\EBD}{{\mathcal E}_{\rm BD}}
\newcommand{\DBD}{{\mathcal D}_{\rm BD}}
\newtheorem{theorem}{Theorem}[section]
\newtheorem{corollary}[theorem]{Corollary}
\newtheorem{lemma}[theorem]{Lemma}
\theoremstyle{definition}
\newtheorem{example}[theorem]{Example}
\newtheorem{remark}[theorem]{Remark}
\numberwithin{equation}{section}
\author[R. Carles]{R\'emi Carles}
\address{Univ Rennes, CNRS\\ IRMAR - UMR 6625\\ F-35000
  Rennes, France}
\email{Remi.Carles@math.cnrs.fr}
\title[LogNLS and isothermal fluids]{Logarithmic Schr\"odinger
  equation and isothermal fluids}
\begin{document}

\maketitle

\begin{abstract}
  We consider the large time behavior in two types of
equations, posed on the whole space $\R^d$: the Schr\"odinger equation
with a logarithmic nonlinearity on the one hand; compressible,
isothermal, Euler, Korteweg and
quantum Navier-Stokes equations on the other hand. We explain some
connections between the two families of equations, and show how these
connections may help having an insight in all cases. We insist on some
specific aspects only, and refer to the cited articles for more
details, and more complete statements. We try to give a general
picture of the results, and present 
some heuristical arguments that can help the intuition,  which are not
necessarily found in the mentioned articles. 
\end{abstract}

\section{Introduction}
\label{sec:intro}

\subsection{Linear equations}
As a preliminary, and for future comparison with the logarithmic
Schr\"odinger equation, we recall some basic facts regarding the large
time dynamics for the linear heat equation and the linear
Schr\"odinger equation, on $\R^d$.
\subsubsection{Heat equation}
Consider the Cauchy problem
\begin{equation*}
 \d_t u = \frac{1}{2}\Delta u ,\quad x\in
      \R^d,\quad u_{\mid t=0}=u_0\in L^1(\R^d). 
  \end{equation*}
The solution is given by the explicit formula: $\dis u(t,x) = \frac{1}{(2\pi
    t)^{d/2}}\int_{\R^d}e^{-\frac{|x-y|^2}{2t}}u_0(y)dy.$ This formula
is classical, and  follows from Fourier analysis, see
e.g. \cite{Rauch91}. To fix ideas, we normalize the Fourier transform
as
\begin{equation*}
  \hat f(\xi) = \frac{1}{(2\pi)^{d/2}}\int_{\R^d} e^{-ix\cdot
    \xi}f(x)dx,\quad f\in \Sch(\R^d). 
\end{equation*}
The Fourier transform of $u$ at time $t$ is given by
\begin{equation*}
  \hat u(t,\xi)= e^{-\frac{t}{2}|\xi|^2}\hat u_0(\xi) =
  \underbrace{e^{-\frac{t}{2}|\xi|^2}\hat u_0(0)}_{\text{order }t^{-d/(2p)}
    \text{ in }L^p} +
  \underbrace{e^{-\frac{t}{2}|\xi|^2}\(\hat u_0(\xi)-\hat
    u_0(0)\)}_{\O\(|\xi| e^{-\frac{t}{2}|\xi|^2}\): \text{ order }t^{-(d+1)/(2p)}
    \text{ in }L^p} .
\end{equation*}
If $m:=\int_{\R^d}u_0=(2\pi)^{d/2}\hat u_0(0)\not =0$, we infer, by the Fourier inverse formula,
\begin{equation*}
u (t,x) \Eq t \infty \frac{m}{(2\pi t)^{d/2}}
      e^{-|x|^2/(2t)},
\end{equation*}
and we leave out the discussion on the norms involved above: the key
message is that the large time description involves a universal
diffusive rate, and a universal
Gaussian profile, the initial data appears only through its mass $m$. 

\subsubsection{Schr\"odinger equation}\label{sec:LS}
For the Schr\"odinger equation, the
initial datum naturally belongs to $L^2$,
  \begin{equation*}
i\d_t u + \frac{1}{2}\Delta u =0,\quad x\in
      \R^d,\quad u_{\mid t=0}=u_0\in L^2(\R^d). 
  \end{equation*}
Again, the solution is given explicitly, now by an oscillatory
integral:
\[u(t,x) = \frac{1}{(2i\pi
    t)^{d/2}}\int_{\R^d}e^{i\frac{|x-y|^2}{2t}}u_0(y)dy.\]
 We emphasize two consequences:
\begin{itemize}
\item Dispersion: $\dis 
    \|u(t)\|_{L^\infty(\R^d)}\lesssim
    \frac{1}{|t|^{d/2}}\|u_0\|_{L^1(\R^d)}.$ 
\item Large time description: $\left\lVert u(t) -
    A(t)u_0\right\rVert_{L^2(\R^d)} \Tend t 
    {\pm\infty} 0$, where
  \begin{equation*}
 A(t)u_0(x) = \frac{1}{(it)^{d/2}}\hat
  u_0\(\frac{x}{t}\)e^{i\frac{\lvert x\rvert^2}{2t}}.
  \end{equation*}
We now have a universal oscillation, but the
profile depends on the initial 
data, through its Fourier transform.
\end{itemize}
To check the second point, expanding the argument of the exponential
in the oscillatory integral, we can write
\begin{equation*}
  u(t,x) = M_t D_t \F M_t u_0(x),
\end{equation*}
where $\F$ stands for the Fourier transform, $M_t$ is the
multiplication by $e^{i|x|^2/(2t)}$, and $D_t$ is the time dependent
dilation
\begin{equation*}
  D_t\varphi(x) = \frac{1}{(it)^{d/2}}\varphi\(\frac{x}{t}\).
\end{equation*}
Then $A(t)u_0 =  M_t D_t \F u_0$, and the approximation follows from
the limit $M_t \to 1$ in $L^2$, as $t\to \infty$, as granted by the
dominated convergence theorem. 
\begin{example}[Explicit computation in the Gaussian
  case]\label{ex:gaussian}
  The evolution of Gaussian initial data is given, for $z\in \C$ such that
  $ \RE z>0$, by:
\begin{equation*}
 e^{i\frac{t}{2}\Delta}\(e^{-z\frac{|x|^2}{2}}\) = \frac{1}{(1+itz)^{d/2}}
  e^{-\frac{z}{1+itz}\frac{|x|^2}{2}}.  
\end{equation*}
\end{example}

\subsection{Nonlinear Schr\"odinger equation: the usual nonlinearity}\label{sec:NLS}

We recall a few standard properties related to the nonlinear
Schr\"odinger equation with power-like nonlinearity. These properties
can be found e.g. in \cite{CazCourant}. 
For $\lambda\in \R$, and
$0<\si<\frac{2}{(d-2)_+}$, consider: 
   \begin{equation}\label{eq:NLS}
i\d_t u + \frac{1}{2}\Delta u =\lambda |u|^{2\si}u ,\quad x\in
      \R^d,\quad u_{\mid t=0}=u_0\in H^1(\R^d). 
    \end{equation}
 Under our assumption on $\si$, the nonlinearity is $H^1$-subcritical
 (the $L^{2\si+2}$-norm in the energy below is controlled by the
 $H^1$-norm, thanks to Sobolev embedding),
 and the Cauchy problem \eqref{eq:NLS} is locally well-posed in
 $H^1(\R^d)$.   The standard proof relies on Strichartz estimates,
 and a fixed point argument on Duhamel's formula; the nonlinearity is
 thus considered as a perturbation,  the problem is
 semilinear. The situation is completely different in the case of
 equations from (compressible) fluid mechanics, addressed in the second part of this survey.
  \subsubsection{Invariants}
  Equation~\ref{eq:NLS} is invariant under space and time translation,
  as well as under the gauge transforms
\[u(t,x)\mapsto e^{i\theta}u(t,x),\quad \theta\in \R.\]
  The  Galilean invariance reads as follows:  if $u(t,x)$ solves \eqref{eq:NLS}, then  for
  any ${\mathbf v}\in \R^d$, so does
  \begin{equation}
    \label{eq:galilee}
    u(t,x-{\mathbf v} t)e^{i{\mathbf v}\cdot x-i|{\mathbf v}|^2 t/2}.
  \end{equation}
  (With a different initial datum.) This property is useful to
  construct multisolitons (with different velocities), as in e.g. \cite{MartelMerle2006}.\\
 The following quantities are formally independent of time:
 \begin{align*}
 \text{Mass: } & M(u(t)):=\|u(t)\|_{L^2(\R^d)}^2,\\
  \text{Momentum: }& J(u(t)):= \IM\int_{\R^d}\bar u(t,x)\nabla u(t,x)dx,\\
\text{Energy: }  & E(u(t)):=\frac{1}{2}\|\nabla
    u(t)\|_{L^2(\R^d)}^2+\frac{\lambda}{\si+1}\|u(t)\|_{L^{2\si+2}(\R^d)}^{2\si+2}
    .
\end{align*} 
 According to the sign of $\lambda$, the energy is or is not a
 positive functional.  
\subsubsection{Defocusing case}
 If $\lambda>0$: the local well-posedness in $H^1$ and the
 conservations of mass and energy imply global existence ($u\in
    L^\infty(\R;H^1(\R^d))$), and if $\si>2/d$, the solution behaves
    asymptotically like a linear solution,
  \begin{equation}\label{eq:scattering}
 \exists u_+\in H^1(\R^d),\quad \|u(t)-e^{i\frac{t}{2}\Delta}
      u_+\|_{H^1(\R^d)}\Tend t \infty 0.
  \end{equation}
The (inverse of) the wave operator is not trivial: $u_0\mapsto u_+$
is one-to-one. In view of the description in the linear case, this
means that the large time asymptotics involves a universal oscillation, a
universal dispersion, and a somehow arbitrary profile. 
\smallbreak

If we assume in addition that $x\mapsto |x|u_0(x)$ belongs to
$L^2(\R^d)$, then the same conclusion as above is known to remain
valid for
some smaller values of $\si$, but \emph{not too small}. Typically, for
$\si\le 1/d$, if $u$ solves \eqref{eq:NLS}, then its large time
behavior cannot be directly compared to the linear evolution, in the
sense that if there exists $u_+\in L^2(\R^d)$ such that
\[ \|u(t)-e^{i\frac{t}{2}\Delta}
  u_+\|_{L^2(\R^d)}\Tend t \infty 0,\]
then necessarily $u_0=u_+=0$ (hence $u\equiv 0$), from
\cite{Barab}. This is due to the presence of \emph{long range
  effects}, and scattering theory must be modified, see
e.g. \cite{HN06} and references therein.  We will give reasons to
consider that the limit $\si\to 0$ leads to the logarithmic
Schr\"odinger equation (see Section~\ref{sec:limit}), and show that
the dynamical properties related 
to that model are very specific.
\subsubsection{Focusing case}

If $\lambda<0$,  finite time blow-up is possible when $\si\ge2/d$, as
proved typically by a virial computation (the second order derivative
of the function  $t\mapsto\|x
u(t)\|_{L^2(\R^d)}^2$ may be smaller than a negative constant,
\cite{Glassey}, see also 
\cite{CazCourant}), and blow-up is characterized 
by the existence of a finite $T^*$ such that
  \begin{equation*}
\lim_{t\to T^*}\|\nabla u(t)\|_{L^2}=\infty.
\end{equation*}
For $\si\ge 2/d$, small initial data generate global solutions, which
are moreover asymptotically linear in the sense of
\eqref{eq:scattering}.

Finally, we evoke the existence of large standing waves, of the form $u(t,x) = e^{i\omega
  t}\psi(x)$. When $\psi$ is a ground state (which is unique up to the
invariants of the associated elliptic equation), the above standing
wave is orbitally stable if and only if  $\si<2/d$ (instability by
blow-up occurs when $\si\ge 2/d$, \cite{BeCa81,Weinstein83}: small --
in $H^1$ --
perturbations of the standing wave may generate a solution which blows
up in finite time). The
right notion is indeed orbital stability, as opposed to asymptotic
stability of the standing wave, due to the invariants of the equation:
in view of the Galilean invariance \eqref{eq:galilee}, a small initial
perturbation $\psi(x)e^{i\mathbf v\cdot x}$ ($|\mathbf v|\ll1$) will
generate a standing wave whose ``support'' becomes distinct from the
support of $u$ for sufficiently large time. Orbital stability consists
in taking the invariants of the equation into account: in the present
case, this means that for any $\eps>0$, there exists $\eta>0$ such
that if
\begin{equation*}
  \text{if}\quad \|\tilde u_0-\psi\|_{H^1}\le \eta,
\quad\text{then}\quad  \sup_{t\in \R}\inf_{\theta\in
    \R}\inf_{y\in \R^d}\|\tilde u(t) - 
  e^{i\theta} \phi(\cdot -y)\|_{H^1}\le \eps,
\end{equation*}
where $\tilde u$ is the solution to \eqref{eq:NLS} with initial data
$\tilde u_0$. See e.g. \cite{CazCourant}.
\smallbreak

We will see that there are many differences in the case where the
power-like nonlinearity is replaced by a logarithmic nonlinearity. 

\subsection{Logarithmic Schr\"odinger equation}\label{sec:logNLS}
We now consider the Cauchy problem
\begin{equation}
  \label{eq:logNLS}
  i\d_t u +\frac{1}{2} \Delta u =\lambda \ln\(|u|^2\)u  ,\quad u_{\mid t=0} =u_0  ,
\end{equation}
with $x\in \R^d$, $d\ge 1$, and $\lambda\in \R$.
This model was introduced in \cite{BiMy76} to
satisfy the following tensorization property: if the initial datum is a tensor
product,
\begin{equation*}
  u_0(x) =\prod_{j=1}^d u_{0j}(x_j),
\end{equation*}
then the solution to \eqref{eq:logNLS} is given by
\begin{equation*}
  u(t,x) =\prod_{j=1}^d u_{j}(t,x_j), 
\end{equation*}
where each $u_j$ solves a one-dimensional equation,
\begin{equation*}
   i\d_t u_j +\frac{1}{2} \d_{x_j}^2 u_j = \lambda
   \ln\(|u_j|^2\)u_j  ,\quad u_{j\mid t=0} =u_{0j} . 
 \end{equation*}
The logarithmic nonlinearity turns out to be the only one satisfying such a
property. This nonlinearity has then been proposed to model various
physical phenomena, e.g.  quantum optics \cite{buljan, KEB00}, nuclear physics \cite{Hef85}, transport and diffusion phenomena \cite{DMFGL03, hansson},
open quantum systems \cite{yasue, HeRe80}, effective quantum gravity
\cite{Zlo10}, theory of superfluidity and 
Bose-Einstein condensation \cite{BEC}. This tensorization property is
classical when the linear Schr\"odinger equation is considered, and
might suggest that nonlinear effects in \eqref{eq:logNLS} are weak: we
will see that on the contrary, the dynamical properties associated to
\eqref{eq:logNLS} are rather unique. This is due to the singularity
of the logarithm at the origin. 
\smallbreak

Like above, \eqref{eq:logNLS} is invariant under space and time
translations, gauge and Galilean transforms. We still have
conservation of mass, momentum and energy, but the expression
of the latter has changed:
\begin{align*}
& M(u(t))=\|u(t)\|_{L^2(\R^d)}^2,\\
 & J(u(t))= \IM\int_{\R^d}\bar u(t,x)\nabla u(t,x)dx,\\
& E(u(t))=\frac{1}{2}\|\nabla u(t)\|_{L^2(\R^d)}^2+\lambda\int_{\R^d}
    |u(t,x)|^2\(\ln|u(t,x)|^2-1\)dx.
\end{align*}
In view of the conservation of mass, we rather consider the energy
\begin{equation}
  \label{eq:ElogNLS}
   E(u(t)):=\frac{1}{2}\|\nabla u(t)\|_{L^2(\R^d)}^2+\lambda\int_{\R^d}
    |u(t,x)|^2\ln|u(t,x)|^2dx.
\end{equation}
The energy has no definite sign, and it is not completely clear to
decide about the influence of the sign of $\lambda$ on  the
dynamics. However, a formal argument suggests that when $\lambda<0$,
solutions cannot disperse: indeed, if $u$ is dispersive, then it
morally goes to zero pointwise, and the argument of the logarithm in
the energy going to zero, the factor of $\lambda$ goes to $-\infty$:
if $\lambda<0$, this contradicts the conservation of the energy, which
would then become infinite. On the other hand, if $\lambda>0$, this
argument shows that if a solution is dispersive, then its $\dot
H^1$-norm becomes infinite in the large time r\'egime. 
\smallbreak

Another unusual feature of \eqref{eq:logNLS} concerns the effect of
the size of the initial data on the dynamics:
If $u$ solves \eqref{eq:logNLS}, then for all
$k\in \C$, so does
\begin{equation}\label{eq:scaling}
u_k(t,x):=  k u(t,x) e^{-it\lambda\ln|k|^2}.  
\end{equation}
This shows that the size of the initial data alters the dynamics only
through a purely time dependent oscillation, a feature which is
fairly unusual for a nonlinear equation. For $k>0$, we readily compute
\begin{equation*}
  \frac{d}{dk} u_k(t,x) = (1-2it)u(t,x) e^{-it\lambda\ln|k|^2} .
  \end{equation*}
 The above quantity has no limit as $k\to 0$ for $t>0$: the flow map
$u_0\mapsto u(t)$ cannot be $C^1$, whichever function spaces are
considered for $u_0$ and $u(t)$, respectively; it is at most
Lipschitzean.
\smallbreak

The next few sections are dedicated to the analysis of the logarithmic
Schr\"odinger equation \eqref{eq:logNLS}. In Section~\ref{sec:disp},
we will see a first connection with models from compressible fluid mechanics, and
from Section~\ref{sec:transition} to the end of the survey, we will
focus our attention on such 
models and some of their generalizations.

\subsection{Schematic summary: power vs. logarithmic
  nonlinearity} 

In the following tables, we give an overview of the results presented
below, in order to emphasize some differences due to the nature
of the nonlinearity. To avoid unnecessary technical details, we assume
in all cases that the initial datum $u_0$ belongs to $\Sigma$, defined
by
\begin{equation*}
  \Sigma=H^1\cap \F(H^1)=\{f\in H^1(\R^d),\ x\mapsto |x|f(x)\in
  L^2(\R^d)\},
\end{equation*}
that the nonlinearity is $H^1$-subcritical,
$0<\si<\frac{2}{(d-2)_+}$, and do not try to invoke sharp
results. (GWP stands for global well-posedness.)

\bigbreak

\noindent {\bf Case $\lambda>0.$}\\

\hspace{-14pt}
\begin{tabular}{|l|c|c|}
  \hline
  Equation & \eqref{eq:NLS} & \eqref{eq:logNLS}\\
  \hline 
  Nonlinearity & $\lambda |u|^{2\si}u$ & $\lambda \ln(|u|^2)u$\\
  \hline
  GWP in $H^1$ & Yes & Yes \\
  \hline
  Dispersion: $(\delta (t))^{-d/2}$ & $\delta (t)=t$ (at least if $\sigma\ge 2/d$) &
    $\delta (t)=2t\sqrt{\lambda  \ln t}$\\
  \hline
   $\dis\lim_{t\to \infty} (\delta
  (t))^{d/2}|u(t,x\delta (t))|$ & $|v_+|$ for any $v_+\in \Sigma$ (if $\si\ge
                                  2/d$) &
                                          $\dis\frac{\|u_0\|_{L^2}}{\pi^{d/4}}
                                          e^{-|x|^2/2}$ \\
  \hline
Growth of $H^1$ norm& Never & Always\\
  \hline
\end{tabular}
\smallbreak

In connection with the results recalled in Sections~\ref{sec:LS} and
\ref{sec:NLS}, $v_+=\hat u_+$, the Fourier transform of the asymptotic
state $u_+$, which may be any function in $\Sigma$. 
\bigbreak
\noindent {\bf Case $\lambda<0.$}\\

\hspace{-14pt}
\begin{tabular}{|l|c|c|}
  \hline
  Equation & \eqref{eq:NLS} & \eqref{eq:logNLS}\\
  \hline 
  Nonlinearity & $\lambda |u|^{2\si}u$ & $\lambda \ln(|u|^2)u$\\
  \hline
  GWP in $H^1$ & For sure, if $\si<2/d$ & Yes \\
  \hline
  Finite time blow up & Possible if $\si\ge 2/d$
                            & Never\\
  \hline
Dispersion & For small data, if $\si\ge 2/d$ &
    Never \\
  \hline
  Standing waves & Exist & Exist \\
  \hline
  Ground states are orbitally stable & Yes, iff $\si<2/d$ & Yes\\
  \hline
  Breathers & ? & Exist \\
 \hline
  Multisolitons & Exist & Exist \\
  \hline
  Multibreathers & ? & Exist \\
  \hline
\end{tabular}

\section{Cauchy problem}
In view of the expression \eqref{eq:ElogNLS}, the natural energy space is given by
\begin{equation*}
  W:= \Big \{ u\in H^1(\R^d) \, , \,  x\mapsto |u(x)|^2\ln |u(x)|^2\in L^1(\R^d)\Big\} \, . 
\end{equation*}
The Cauchy problem \eqref{eq:logNLS} is indeed solved
in $W$, provided that $\lambda<0$:
\begin{theorem}[\cite{CaHa80}]
Suppose $\lambda <0$ and $u_0\in W$: there exists a unique, global 
  solution $u\in C(\R;W)$ to \eqref{eq:logNLS}. The
  mass~$M(u)$ and the energy $E(u)$ are independent of time. 
\end{theorem}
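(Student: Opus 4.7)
The plan is to follow the strategy of Cazenave--Haraux, the central obstacle being that the map $z\mapsto z\ln|z|^2$ fails to be locally Lipschitz near the origin, so neither the usual Strichartz fixed point for \eqref{eq:logNLS} itself nor the standard energy-estimate argument for uniqueness applies directly. I would therefore regularize: for $\eps>0$, replace the nonlinearity by $z\mapsto z\ln(\eps+|z|^2)$, which is globally Lipschitz on bounded sets of $\C$. The regularized equation is semilinear in the Kato sense and, by a standard Strichartz/Duhamel fixed point in $H^1(\R^d)$, admits a unique local solution $u^\eps\in C(\R;H^1)$. Its mass $M(u^\eps)$ and a regularized energy $E^\eps$ (where the logarithmic term is replaced by an antiderivative of $s\mapsto \ln(\eps+s)$) are preserved in time, so that globalization reduces to showing uniform $H^1$ bounds.

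Because $\lambda<0$ the energy is not positive, but I would close the estimate using the logarithmic Sobolev inequality, which bounds $\int |u|^2\ln|u|^2$ by $\mu\|\nabla u\|_{L^2}^2 + C(\mu)\|u\|_{L^2}^2$ for arbitrarily small $\mu>0$. Choosing $\mu<1/(4|\lambda|)$ absorbs the bad sign in $E^\eps$ and yields, together with the conservation of mass, a uniform bound on $u^\eps$ in $L^\infty(\R;H^1)\cap L^\infty(\R;W)$. From these bounds, and from \eqref{eq:logNLS} itself giving a uniform bound on $\d_t u^\eps$ in $H^{-1}$, compactness (Aubin--Lions, local in $x$) delivers a subsequence converging strongly in $L^2_{\rm loc}$ to some $u$, with enough control to pass to the limit in the nonlinearity since $|z\ln(\eps+|z|^2)|\lesssim |z|+|z|\ln(1+|z|^2)$ is controlled uniformly in $\eps$ by $W$-bounds; the limit lies in $C(\R;W)$.

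The hard part is uniqueness, where regularization gives no quantitative information. Here the crucial pointwise inequality, valid for every $z,w\in \C$,
\begin{equation*}
 \bigl|\IM\bigl[(z\ln|z|^2 - w\ln|w|^2)(\bar z-\bar w)\bigr]\bigr|\le 2|z-w|^2,
\end{equation*}
is used: it is an elementary consequence of the fact that the derivative of the imaginary logarithm along real directions is bounded even though the logarithm is unbounded. Given two solutions $u_1,u_2\in C(\R;W)$ with the same initial datum, taking the imaginary part of the $L^2$ pairing of the equation for $u_1-u_2$ with $u_1-u_2$ and using this inequality gives
\begin{equation*}
 \frac{d}{dt}\|u_1(t)-u_2(t)\|_{L^2}^2 \le 4|\lambda|\,\|u_1(t)-u_2(t)\|_{L^2}^2,
\end{equation*}
and Gr\"onwall's lemma forces $u_1\equiv u_2$. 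Finally, conservation of mass and energy follows by regularization: for smoothed initial data the formal computations are justified, and both functionals are lower semicontinuous with respect to the convergence obtained above, while $W$-regularity suffices for the energy identity to pass to the limit $\eps\to 0$ in both time directions.
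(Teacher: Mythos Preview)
Your proposal follows essentially the same architecture as the paper: regularize the logarithm to obtain approximate solutions, derive uniform a priori bounds from the (regularized) energy and mass, extract a limit by compactness, and prove uniqueness via the key pointwise inequality of Cazenave--Haraux combined with an $L^2$ Gr\"onwall argument. The one substantive difference is the a priori $H^1$ bound: you invoke the logarithmic Sobolev inequality to absorb the negative entropy term directly into the kinetic energy, whereas the paper splits the entropy over $\{|u|>1\}$ and $\{|u|<1\}$ and controls the first piece by Gagliardo--Nirenberg, $\int_{|u|>1}|u|^2\ln|u|^2\lesssim \|u\|_{L^2}^{2+\eps-\eps d/2}\|\nabla u\|_{L^2}^{\eps d/2}$, closing by taking $\eps$ small. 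Both routes yield the required $W$-bounds; your use of log--Sobolev is arguably cleaner for the gradient estimate, while the paper's decomposition is more elementary and adapts more transparently to the complementary sign $\lambda>0$ (where a moment condition is needed, cf.\ Remark~\ref{rem:moment-cauchy}). Two small remarks: the constant in the pointwise inequality is $4$ rather than $2$ in the paper's statement (Lemma~\ref{lem:unique}), though this is immaterial for Gr\"onwall; and the paper flags that the $L^2$ energy identity requires knowing $u\in C(\R;L^2)$, a point worth making explicit since your solutions are a priori only in $L^\infty_{\rm loc}(\R;W)$ before this is checked.
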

The proof given in \cite{CaHa80} relies on compactness arguments,
using a regularization of the 
  nonlinearity.
An alternative proof has been proposed more recently by Masayuki
Hayashi \cite{HayashiM2018}, providing 
  the strong convergence of a sequence of approximate solutions. See
  also \cite{GLMN12} for the local Cauchy problem in $H^2$  on bounded
 3D domains.  
We emphasize that the sign of $\lambda$ appears when seeking a
priori estimates: if $\lambda<0$, we have
  \begin{align*}
 0\le    E_+(u(t)) &:= \frac{1}{2}\|\nabla
  u(t)\|_{L^2(\R^d)}^2+\lambda\int_{|u|<1}
  |u(t,x)|^2\ln |u(t,x)|^2dx  \\
&\le E(u_0)\underbrace{-\lambda}_{+|\lambda|} \int_{|u|>1}
  |u(t,x)|^2\ln |u(t,x)|^2dx .
  \end{align*}
Since the logarithm grows slowly,
\begin{align*}
  \int_{|u|>1}
  |u(t,x)|^2\ln |u(t,x)|^2dx &\le C_\eps \int_{|u|>1}
  |u(t,x)|^{2+\eps}dx \\
&\lesssim \|u(t)\|_{L^2(\R^d)}^{2+\eps-\eps
    d/2} \|\nabla u(t)\|_{L^2(\R^d)}^{\eps d /2},
\end{align*}
where we have used Gagliardo-Nirenberg inequalities, for
$\eps<2d/(d-2)_+$. Using the conservation of 
mass, this implies
\begin{equation*}
  E_+(u(t)) \le E(u_0)+C_\eps E_+\(u(t)\)^{\eps d /4}.
\end{equation*}
Therefore, picking $\eps>0$ sufficiently small yields $E_+\in
L^\infty(\R)$, hence (resuming the above inequality) $u\in
L^\infty(\R;W)$. 
\begin{remark}\label{rem:moment-cauchy}
  In the case $\lambda>0$, the same strategy would require the control of
  \begin{equation*}
  \int_{|u|<1}
  |u(t,x)|^2\ln \frac{1}{|u(t,x)|^2}dx \le C_\eps \int_{|u|<1}
  |u(t,x)|^{2-\eps}dx .
\end{equation*}
The above Lebesgue norm involves an index below $2$, and Sobolev
embedding cannot help: we will see that the finiteness of a momentum
in $L^2$ saves the day. 
\end{remark}
Uniqueness follows from the algebraic property discovered in \cite{CaHa80}:
\begin{lemma}[\cite{CaHa80}]\label{lem:unique}
  We have 
  \begin{equation*}
    \left| \IM\left(\left(z_2\ln|z_2|^2
      -z_1\ln|z_1|^2\right)\left(\bar z_2-\bar z_1\right)\right)\right|\le
    4|z_2-z_1|^2 \, ,\quad \forall 
    z_1,z_2\in {\mathbb C} \, .
  \end{equation*}
\end{lemma}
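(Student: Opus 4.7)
The plan is to reduce the inequality to an elementary modulus/logarithm estimate via one algebraic identity. First I expand the product
\[(z_2 \ln|z_2|^2 - z_1 \ln|z_1|^2)(\bar z_2 - \bar z_1).\]
Two of the four resulting monomials, namely $|z_j|^2 \ln|z_j|^2$ for $j=1,2$, are real and contribute nothing to the imaginary part; the cross terms combine through $\IM(z_1 \bar z_2) = -\IM(z_2 \bar z_1)$ into the clean identity
\[\IM\bigl((z_2 \ln|z_2|^2 - z_1 \ln|z_1|^2)(\bar z_2 - \bar z_1)\bigr) = -2 \ln\frac{|z_2|}{|z_1|}\,\IM(z_2 \bar z_1).\]
Thus the task reduces to bounding $\bigl|\ln(|z_2|/|z_1|)\,\IM(z_2\bar z_1)\bigr|$ by $2|z_2-z_1|^2$.

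The left-hand side of the lemma is manifestly invariant under swapping $z_1$ and $z_2$, so I may assume $|z_2| \ge |z_1|$; the degenerate case $z_1 = 0$ is treated separately, using the convention $0\ln 0 = 0$ — the original quantity is then the imaginary part of the real number $|z_2|^2 \ln|z_2|^2$, hence vanishes. For the generic case $|z_2| \ge |z_1| > 0$ I estimate the two factors independently. For the imaginary-part factor, the identity $\IM(z_2\bar z_1) = \IM((z_2-z_1)\bar z_1)$ (valid because $|z_1|^2\in\R$) gives
\[|\IM(z_2\bar z_1)| \le |z_2 - z_1|\,|z_1|.\]
For the logarithmic factor, $\ln(1+x) \le x$ applied to $x = (|z_2|-|z_1|)/|z_1|\ge 0$, combined with the reverse triangle inequality $|z_2|-|z_1| \le |z_2-z_1|$, yields
\[0 \le \ln\frac{|z_2|}{|z_1|} \le \frac{|z_2-z_1|}{|z_1|}.\]

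Multiplying these two bounds, the factors $|z_1|$ cancel and I obtain $2|z_2-z_1|^2$, which is in fact sharper than the stated $4|z_2-z_1|^2$ by a factor of two. The argument is elementary; the only non-obvious step is the opening algebraic identity, whose key feature is the disappearance of the diagonal contributions $|z_j|^2 \ln|z_j|^2$. No genuine obstacle arises beyond sign bookkeeping and the careful handling of $z_j = 0$.
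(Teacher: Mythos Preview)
Your argument is correct. The algebraic identity is right (the diagonal terms $|z_j|^2\ln|z_j|^2$ drop out, and the cross terms combine via $\IM(z_1\bar z_2)=-\IM(z_2\bar z_1)$), and the two elementary estimates on $|\IM(z_2\bar z_1)|$ and $\ln(|z_2|/|z_1|)$ do the job; multiplying them yields $|z_2-z_1|^2$ for the reduced quantity, hence $2|z_2-z_1|^2$ for the original left-hand side, which is indeed the sharper constant. One cosmetic remark: your sentence ``multiplying these two bounds\dots I obtain $2|z_2-z_1|^2$'' is slightly ambiguous, since the product of the two bounds is $|z_2-z_1|^2$ and the factor $2$ comes from reinserting the identity; say so explicitly.

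As for comparison: the paper does not prove this lemma at all---it is simply quoted from Cazenave--Haraux \cite{CaHa80} and then applied in the $L^2$ uniqueness argument. Your proof is essentially the one given in that reference.
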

Formally, if $u_1$ and $u_2$ are two solutions of \eqref{eq:logNLS},
$w:=u_2-u_1$ solves
\begin{equation*}
  i\d_t w+\frac{1}{2}\Delta w = \lambda\(u_2\ln|u_2|^2- u_1\ln|u_1|^2\).
\end{equation*}
Multiplying the above equation by $\bar w$, integrating in space and
considering the imaginary part, Lemma~\ref{lem:unique} yields
\begin{equation*}
  \frac{1}{2}\frac{d}{dt}\|w(t)\|_{L^2}^2 \le 4 |\lambda|\|w(t)\|_{L^2}^2 ,
\end{equation*}
and Gronwall lemma provides uniqueness. One has to be cautious though,
this argument is fully justified provided that we know 
$u\in C(\R;L^2)$, a property which is satisfied for $u\in
L^\infty(\R;W)$, as shown by a careful examination of
\eqref{eq:logNLS}; see \cite{CaHa80} for details. 
\smallbreak

This strategy was adapted in \cite{GuLoNi10} to consider the case
$\lambda>0$, under the extra assumption $|x|^{1/2}u_0\in L^2(\R^d)$,
for $d=3$. 
\smallbreak

In \cite{CaGa18}, another compactness method was proposed, consisting
in neutralizing the singularity of the logarithm at the origin: for
$\eps>0$, consider $u^\eps$ solution to 
\begin{equation}
  \label{eq:logNLS-app}
  i\d_t u^\eps +\frac{1}{2} \Delta u ^\eps = \lambda \ln\(\eps+|u^\eps|^2\)u^\eps  ,\quad u^\eps_{\mid t=0} =u_0  .
\end{equation}
For fixed $\eps>0$, the above nonlinearity is smooth and
$L^2$-subcritical, and there exists a unique solution at the
$L^2$-level \cite{TsutsumiL2}. 
Assuming $u_0\in H^1$, $|x|^\alpha u_0\in L^2$ for some $0<\alpha\le
1$, we can prove a priori estimates on bounded time intervals, which
are uniform in $\eps\in ]0,1]$, and infer:
\begin{theorem}[\cite{CaGa18}]\label{theo:GWP}
Let $\lambda\in \R$,  $u_0\in H^1\cap \F(H^\alpha)$ for some
$0<\alpha\le 1$: \eqref{eq:logNLS} has a unique, global 
  solution $u\in L^\infty_{\rm loc}(\R;H^1\cap \F(H^\alpha))$. The
  mass~$M(u)$ and the energy $E(u)$ are independent of time. 
\end{theorem}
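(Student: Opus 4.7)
The plan is to implement the regularization \eqref{eq:logNLS-app} and pass to the limit $\eps \to 0$. For each $\eps \in (0,1]$ the nonlinearity $z \mapsto \ln(\eps+|z|^2)z$ is smooth and $L^2$-subcritical, so \cite{TsutsumiL2} yields a unique global $u^\eps \in C(\R;L^2)$ that, starting from $u_0 \in H^1$, is actually in $C(\R;H^1)$ by persistence of regularity. Along this flow the mass $\|u^\eps\|_{L^2}^2$ and a regularized energy $E^\eps(u^\eps)$, obtained from \eqref{eq:ElogNLS} by replacing $\ln|u|^2$ with $\ln(\eps+|u|^2)$ (after subtracting an $x$-independent constant to keep the integrand in $L^1$), are preserved.

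The core task is to prove a priori bounds on $u^\eps$ in $L^\infty_{\rm loc}(\R;H^1\cap \F(H^\alpha))$ that are uniform in $\eps\in (0,1]$. For $\lambda<0$, the $H^1$ bound follows exactly as in the argument sketched after the previous theorem, essentially unchanged by the regularization. For $\lambda>0$ the argument needs the extra input announced in Remark \ref{rem:moment-cauchy}: the quantity
\begin{equation*}
\int_{|u^\eps|<1}|u^\eps|^2\ln\tfrac{1}{\eps+|u^\eps|^2}\,dx \le C_\delta \int_{\R^d}|u^\eps|^{2-\delta}\,dx
\end{equation*}
must be controlled, and H\"older against the weight $\langle x\rangle^{(2-\delta)\alpha}$ bounds the right-hand side by $\||x|^\alpha u^\eps\|_{L^2}^{2-\delta}$ times a finite constant, provided $\delta<4\alpha/(2\alpha+d)$. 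It then remains to propagate $\|u^\eps(t)\|_{\F(H^\alpha)}$ uniformly in $\eps$: for $\alpha=1$ the Galilean operator $J(t)=x+it\nabla$ commutes with the free Schr\"odinger operator and acts cleanly on the nonlinearity (via the factorization of phase and modulus), while for fractional $\alpha\in (0,1)$ one works on the Fourier side with $\langle\xi\rangle^\alpha \widehat{u^\eps}$ and combines a fractional Leibniz rule with the slow growth of the logarithm to obtain a Gronwall-type inequality whose constants do not depend on $\eps$. Carrying out this fractional moment propagation against a logarithmic nonlinearity is the main technical obstacle.

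With uniform bounds in hand, extract a weak-$\ast$ limit $u\in L^\infty_{\rm loc}(\R;H^1\cap\F(H^\alpha))$ of a subsequence $u^{\eps_n}$, and improve to strong convergence in $C_{\rm loc}(\R;L^2_{\rm loc})$ by an Aubin-Lions argument. Dominated convergence then allows passing to the limit in \eqref{eq:logNLS-app}, using the pointwise convergence $\ln(\eps+r)\to \ln r$ for $r>0$ together with the uniform integrability of $|u^\eps|^2\bigl|\ln|u^\eps|^2\bigr|$ supplied by the energy bounds. Uniqueness is immediate from Lemma \ref{lem:unique} applied to the difference of two solutions in $C(\R;L^2)$, exactly as in the $\lambda<0$ proof of \cite{CaHa80}, so the whole sequence converges; conservation of mass and energy for the limit follow from strong $L^2$ convergence combined with lower semicontinuity of norms and the time reversibility of \eqref{eq:logNLS}.
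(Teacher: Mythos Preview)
Your overall compactness strategy (regularize via \eqref{eq:logNLS-app}, uniform a priori bounds, extract a limit, pass to the limit, uniqueness from Lemma~\ref{lem:unique}) matches the paper. The divergence is in how the a priori bounds are obtained, and there the paper's route is both simpler and avoids the obstacle you flag.

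For the $H^1$ bound, the paper does \emph{not} use conservation of a regularized energy. Instead it differentiates \eqref{eq:logNLS-app} in space and runs the same $L^2$ estimate as in the uniqueness argument: multiplying by $\nabla\bar u^\eps$, integrating and taking the imaginary part yields
\[
\frac{1}{2}\frac{d}{dt}\|\nabla u^\eps(t)\|_{L^2}^2 \le 2|\lambda|\,\|\nabla u^\eps(t)\|_{L^2}^2,
\]
hence a Gronwall bound on every bounded time interval, uniform in $\eps$ and \emph{independent of the sign of $\lambda$}. No case distinction, and no need to feed the moment back into the energy.

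For the moment, the paper stays in physical space: multiply the equation by $\langle x\rangle^{2\alpha}\bar u^\eps$, integrate, take the imaginary part. The nonlinear term is real (gauge invariance) and drops out entirely, leaving
\[
\frac{d}{dt}\|\langle x\rangle^\alpha u^\eps(t)\|_{L^2}^2 \le 2\alpha\,\|\langle x\rangle^\alpha u^\eps(t)\|_{L^2}\,\|\nabla u^\eps(t)\|_{L^2},
\]
valid for any $0<\alpha\le 1$. Combined with the $H^1$ bound above, the system closes.

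Your proposed route runs into two concrete problems. First, for $\lambda>0$ the energy argument needs the moment, while your moment propagation (via $J(t)$ or a Fourier-side argument) is itself coupled to $\|\nabla u^\eps\|_{L^2}$; this can in principle be closed as a joint Gronwall, but it is unnecessarily convoluted. Second, and more seriously, the fractional step is not set up correctly: $\langle\xi\rangle^\alpha\widehat{u^\eps}\in L^2$ is the $H^\alpha$ norm, not the $\F(H^\alpha)$ norm; the latter corresponds to $|\nabla_\xi|^\alpha\widehat{u^\eps}$, and a fractional Leibniz rule on the Fourier side for the logarithmic nonlinearity (non-Lipschitz at zero) is not a standard tool. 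The paper sidesteps this completely because, in the physical-space moment computation, the nonlinearity never appears.
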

Heuristically, the assumption $|x|^\alpha u_0\in L^2$ for some $0<\alpha\le
1$ seems rather natural in view of
Remark~\ref{rem:moment-cauchy}. Indeed,
considering $0<\eta<\frac{4\alpha}{d+2\alpha}$, we have
  \begin{equation}\label{eq:GNdual}
  \int_{{\mathbb R}^d} |u|^{2-\eta} \lesssim \|u\|_{L^2}^{2-\eta-\frac{d \eta}{2\alpha}}
  \left\lVert \lvert x\rvert^\alpha u\right\rVert_{L^2}^{\frac{d \eta}{2\alpha}} .
\end{equation}
This estimate is readily established by using H\"older estimate: let
$p=\frac{2}{2-\eta}$, so its dual exponent is $p'=\frac{2}{\eta}$. Fix
$R>0$, and write
\begin{equation*}
   \int_{|x|<R} |u|^{2-\eta}\le |B(0,R)|^{1/p'}
   \|u\|_{L^2(\R^d)}^{2/p} \lesssim R^{d/p'} \|u\|_{L^2(\R^d)}^{2-\eta}.
 \end{equation*}
 For large $x$, write
 \begin{equation*}
   \int_{|x|>R} |u|^{2-\eta}=  \int_{|x|>R} |x|^{-\beta}|x|^\beta
   |u|^{2-\eta}\lesssim \(\int_R^\infty
   \frac{r^{d-1}}{r^{p'\beta}}dr\)^{1/p'} \left\|
     |x|^{\frac{\beta}{2-\eta}}u\right\|_{L^2(\R^d)} ^{2-\eta}.
 \end{equation*}
 We now choose $\beta$ so that $\frac{\beta}{2-\eta}=\alpha$ and that
the first integral on the right-hand side is finite, $p'\beta>d$. This
means $0<\eta<\frac{4\alpha}{d+2\alpha}$, and optimizing in $R$,
\begin{equation*}
   R^{d/p'} \|u\|_{L^2(\R^d)}^{2-\eta} = R^{d/p'-\beta}\left\|
     |x|^{\frac{\beta}{2-\eta}}u\right\|_{L^2(\R^d)}
   ^{2-\eta}\Longleftrightarrow R^\alpha = \frac{\left\lVert \lvert
       x\rvert^\alpha u\right\rVert_{L^2}}{\|u\|_{L^2}}, 
 \end{equation*}
 we obtain \eqref{eq:GNdual}. Therefore, choosing $0<\eta\ll 1$, we
 guess that any solution in $H^1\cap \F(\H^\alpha)$ is global.
\smallbreak

 The
 complete argument to prove Theorem~\ref{theo:GWP} consists in
 differentiating \eqref{eq:logNLS-app} 
 in space, and using the same $L^2$-estimate as for uniqueness
 (multiply by $\nabla \bar u$, integrate in space, and take the
 imaginary part), to get
 \begin{equation*}
   \frac{1}{2}\frac{d}{dt}\|\nabla u^\eps(t)\|_{L^2}^2 \le
   2|\lambda|\|\nabla u^\eps(t)\|_{L^2}^2 ,
 \end{equation*}
hence a control on bounded time intervals, which is uniform in
$\eps$. To get compactness in space, we compute
\begin{align*}
\frac d{dt} \|\<x\>^\alpha u^\eps(t)\|_{L^2(\R^d)}^2 &= 2\alpha\IM \int  \frac{x
  \cdot \nabla  u_{\varepsilon}  }{  \langle x \rangle^{2-2\alpha}} \,
\overline u_{\varepsilon}  \, dx \\ 
& \le 2\alpha\|\left\langle x\right\rangle^{2\alpha-1}u_\varepsilon(t)
\|_{L^2({\mathbb R}^d)} \|\nabla 
u_\varepsilon(t)  \|_{L^2({\mathbb R}^d)} \\
&\le 2\alpha\|\left\langle x\right\rangle^{\alpha}u_\varepsilon(t)  \|_{L^2({\mathbb R}^d)} \|\nabla
u_\varepsilon(t)  \|_{L^2({\mathbb R}^d)},
\end{align*}
since $\alpha\le 1$, hence a closed system of estimate, uniformly in
$\eps$. We refer to \cite{CaGa18} for the remaining arguments. \\

We recall the notation
\begin{equation*}
  \Sigma=H^1\cap \F(H^1)=\{f\in H^1(\R^d),\ x\mapsto |x|f(x)\in
  L^2(\R^d)\},
\end{equation*}
and keep in mind that if $u_0\in \Sigma$, then \eqref{eq:logNLS} has a
unique solution $u\in L^\infty_{\rm loc}(\R;\Sigma)$.

\begin{remark}[Higher regularity]
  As the nonlinearity $z\mapsto z\ln|z|^2$ has limited regularity, it
  is not obvious to propagate higher $H^s$ regularity in
  \eqref{eq:logNLS}. Typically, 
  \begin{align*}
    \d_j^2 \(u\ln|u|^2\) & = \d_j^2u \ln |u|^2 +\frac{\ln \bar u}{u}(\d_j u
               )^2+4\frac{\ln u}{u}|\d_j u|^2 + \d_j^2u \ln \bar u
                           +\frac{u}{\bar u}\ln u \,\d_j^2\bar u \\
    &\quad-\frac{u}{\bar
               u^2}(\d_j \bar u)^2\ln u,
  \end{align*}
  and it is not clear to propagate the $H^2$ regularity by
  differentiating the equation twice in space. On the other hand, a
  specificity of Schr\"odinger equations is that $H^2$ regularity in
  space can be read from the $L^2$ regularity of $\d_t u$, see
  e.g. \cite[Section~5.3]{CazCourant}. Replacing the  space
  derivatives with time derivative in the above estimates, and
  noticing that if $u_0\in H^2\cap \F (H^\alpha)$ for some $\alpha>0$,
  \begin{equation*}
    i\d_t u_{\mid t=0} = -\frac{1}{2}\Delta u_0 +\lambda
    \ln\(|u_0|^2\) u_0\in L^2(\R^d),
  \end{equation*}
  we infer $\d_t u \in L^\infty_{\rm loc}(\R;L^2(\R^d))$. 
  Theorem~\ref{theo:GWP} implies $u\ln|u|^2\in L^\infty_{\rm
    loc}(\R;L^2(\R^d))$, and then from \eqref{eq:logNLS}, $\Delta u\in
  L^\infty_{\rm loc}(\R;L^2(\R^d))$, hence $u\in   L^\infty_{\rm
    loc}(\R;H^2(\R^d))$. However, propagating $H^3$ regularity (and
  higher)  is  still an open question. 
\end{remark}
\section{Explicit Gaussian solutions}\label{sec:gaussian}
An important feature of \eqref{eq:logNLS}, noticed already in
\cite{BiMy76},  is that Gaussian initial data propagate as Gaussian
solutions. Plugging Gaussian solutions into \eqref{eq:logNLS}, this
property is suggested by the property that in the presence of a
quadratic, possibly time dependent potential in (linear) Schr\"odinger
equations,
\[ i \d_t u + \frac{1}{2}\Delta u =\sum_{j=1}^d \Omega_j(t)x_j^2 u\quad
    ;\quad u_{\mid t=0} = u_0,
    \]
Gaussian   initial data propagate as Gaussian
solutions; see e.g. \cite{Hepp,Hag80,Hag81}. 
\subsection{General computation}

 Suppose $d=1$, and plug $ u(t,x) =
  b(t)e^{-a(t)x^2/2}$ into \eqref{eq:logNLS}: simplifying by
  $e^{-a(t)x^2/2}$, we get
 \begin{equation*}
  i\dot b -i \dot a \frac{x^2}{2}b
  -\frac{ab}{2} +a^2
  \frac{x^2}{2}b = \lambda \( \ln\(|b|^2\) -\(\RE a\) x^2
  \)b ,
\end{equation*}
 hence
\begin{equation*}
  i\dot a -a^2=2\lambda \RE a\, ;\quad i\dot b -\frac{ab}{2} = 
\lambda b \ln\(|b|^2\).
\end{equation*}
We can express $b$ as a function of $a$:
\begin{equation}\label{eq:formule-b}
   b(t) = b_0 \exp \({-i\lambda t \ln\(|b_0|^2\)-\frac{i}{2}A(t)
    -i\lambda \IM\int_0^tA(s)sds} \)\, ,
\end{equation}
where we have set $\dis A(t):=\int_0^t a(s)ds$. 
 So we focus on
\begin{equation*}
  i\dot a -a^2=2\lambda \RE a,\quad a_{\mid t=0}=a_0=\alpha_0+i\beta_0.
\end{equation*}
We seek $a$ of the form $\dis 
a = -i\frac{\dot \omega}{\omega}$.  This yields $\dis
 \ddot \omega = 2\lambda \omega\IM\frac{\dot \omega}{\omega}$. \\
Introducing a polar decomposition $\omega=r e^{i\theta}$, we find
\begin{equation*}
   \ddot r -(\dot \theta)^2r = 2\lambda r \dot \theta \, ;\quad \ddot
   \theta r +2\dot \theta \dot r=0  . 
\end{equation*}
Notice that
\[
  \dot \theta_{\mid t = 0} = \alpha_0 \, , \quad \left(\frac{\dot r}r\right)_{\mid t = 0} = -\beta_0 \, .
\]
We  decide~$r(0)  =1$, so $
  \dot \theta(0)=\RE a_0=\alpha_0$ and $ \dot r(0)=-\IM a_0=-\beta_0.$
Note
\begin{equation*}
 \frac{d}{dt}\( r^2\dot \theta\) = r\( 2\dot r \dot
 \theta+r\ddot \theta\)=0 \, , 
\end{equation*}
and we can express the problem in terms
of $r$ only: 
\begin{equation}\label{eq:equa-diff-r}
 a (t)= \frac{\alpha_0}{r(t)^2}-i\frac{\dot r(t)}{r(t)},
\quad   \ddot r =\frac{\alpha_0^2}{r^3} + 2\lambda
   \frac{\alpha_0}{r},\quad r(0)=1 \,,\ \, \dot r(0)= -\beta_0  \,. 
\end{equation}
Multiply by $\dot r$ and integrate:
\begin{equation}\label{eq:dotr}
  (\dot r)^2 = \beta_0^2 +\alpha_0^2 -\frac{\alpha_0^2}{r^2}+4\lambda
  \alpha_0\ln |r|. 
\end{equation}
Cauchy-Lipschitz theorem yields the existence of a unique local
solution. The obstruction to the existence of a global solution is the
possibility of $r$ going to zero. 
Supposing $r\to 0$ leads to a contradiction in \eqref{eq:dotr}, so
there exists 
\[r(t)\ge \delta>0,\quad t\ge 0,\]
and the solution is global in time, and smooth.
\begin{remark}[Decay rate]
  In the above computations, the formula \eqref{eq:formule-b} shows
  that the decay rate is given by
  \begin{equation*}
    |b(t)| = |b_0|\exp\( \frac{1}{2}\IM A(t)\)= \frac{|b_0|}{\sqrt{r(t)}},
  \end{equation*}
  where the last identity follows from \eqref{eq:equa-diff-r}. 
\end{remark}
\begin{remark}[Linear Schr\"odinger equation]
  In the case $\lambda=0$ (linear case), the equation for $r$ reads
  \[ \ddot r =\frac{\alpha_0^2}{r^3} ,\quad r(0)=1 \,,\ \, \dot r(0)=
    -\beta_0  \,. \]
  The solution is given by
  \[ r(t) =\sqrt{1+t^2(\alpha_0^2+\beta_0^2)-2t\beta_0},\]
  which is a rather indirect way to recover the formula presented in
  Example~\ref{ex:gaussian}. 
\end{remark}
\subsection{Nondispersive case: $\lambda<0$}

Suppose $\lambda<0$: in view of \eqref{eq:dotr}, $r$ is bounded. 
Standard techniques in the study of ordinary differential equations
show that
every solution is periodic in time.
The relation \eqref{eq:dotr} defines the potential
energy (see e.g. \cite{Arn06})
\begin{equation*}
  U(r)= -\frac{1}{2}\beta_0^2 -\frac{\alpha_0^2}{2}\(1-\frac{1}{r^2}\)
  -2\lambda \alpha_0 \ln |r|= -\frac{1}{2}\beta_0^2 -\frac{\alpha_0^2}{2}\(1-\frac{1}{r^2}\)
  +2|\lambda|\alpha_0 \ln |r|.
\end{equation*}
We check that $U$ is decreasing on
$(0,\sqrt{\alpha_0/2|\lambda|}]$, and increasing on
$[\sqrt{\alpha_0/2|\lambda|},\infty)$. The minimum is given by
\begin{equation*}
  U_{\rm min}  =  -\frac{1}{2}\beta_0^2
                +\frac{\alpha_0^2}{2}\(x-1-x\ln x\)\Big|_{x =
                \frac{2|\lambda|}{\alpha_0}}\le 0,
\end{equation*}
in view of the property
\begin{equation*}
  x-1-x\ln x\le 0,\quad \forall x>0.
\end{equation*}
We have $U_{\rm min} <0$ unless $\beta_0=\dot r(0)=0$ and
$\alpha_0=2|\lambda|$, the only case where $U_{\rm min} =0$.  See
Figure~\ref{fig:potentiel}. Note that
the case $\beta_0=0$ and
$\alpha_0=2|\lambda|$ is degenerate, in the sense that we then have
$r(t)\equiv 1$.
\begin{figure}[ht]
\begin{center}
\rotatebox{0}{\resizebox{!}{0.5\linewidth}{%
   \includegraphics{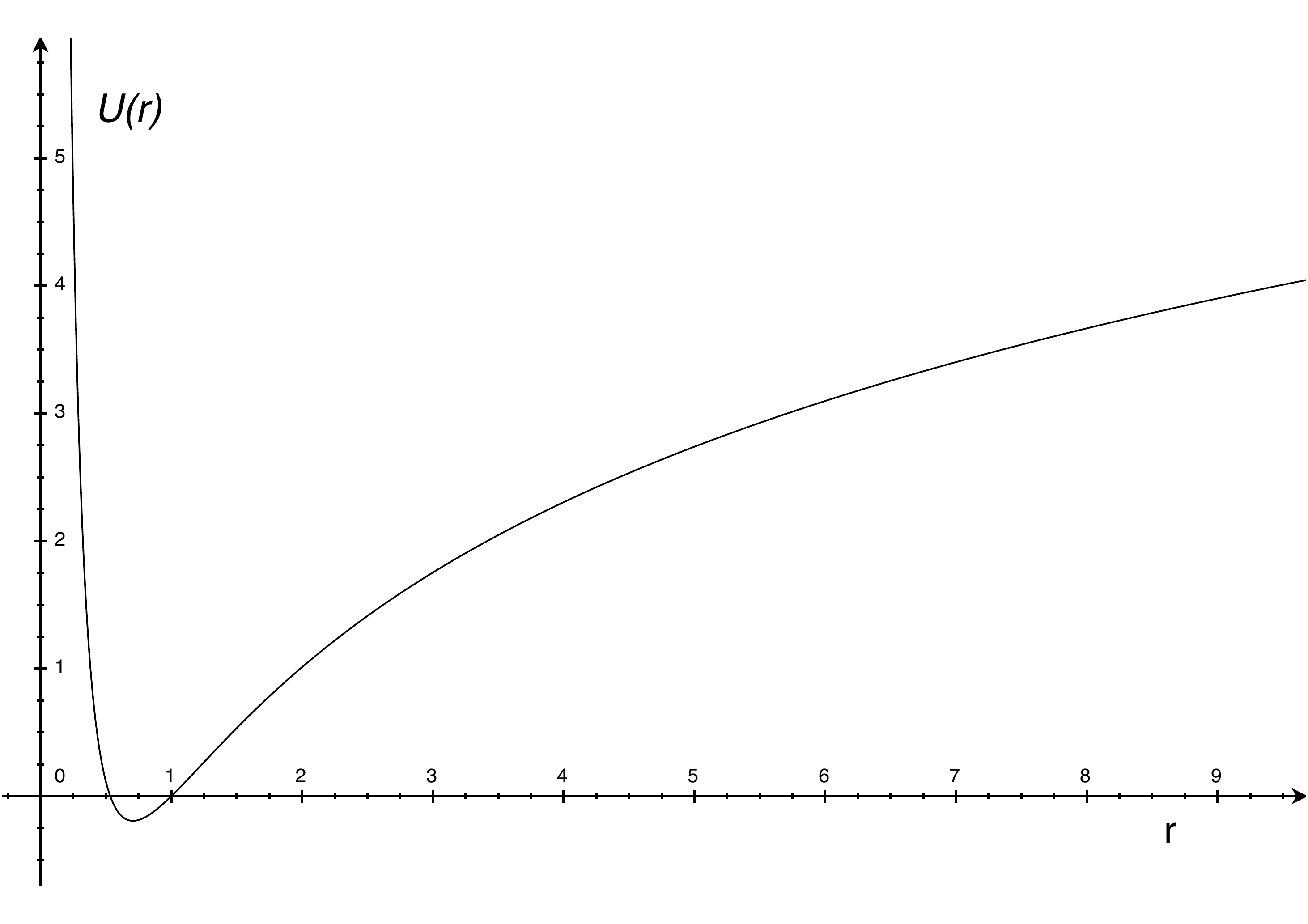}}}
\end{center}
\caption{Potential $U$ in the case $a_0=1$ and $\lambda=-1$.} 
\label{fig:potentiel}
\end{figure}

For every energy $E>U_{\rm min}$, the equation
$U(r)=E$ has two distinct solutions. We infer (see
e.g. \cite{Arn06}) that all the solutions to \eqref{eq:equa-diff-r}
are periodic, and the half-period is given by
\begin{equation*}
  \frac{T}{2}= \int_{r_*}^{r^*}\frac{dr}{\sqrt{E-U(r)}},
\end{equation*}
where $r_*<r^*$ are the two above mentionned solutions.
Therefore, the amplitude of the corresponding
(Gaussian) solution $u$ is time-periodic, and we naturally call such
solutions \emph{breathers}; see \cite{FeDCDS} for more
details. Note however that only the amplitude  is
  periodic, not the solution $u$ itself: in view of
  \eqref{eq:formule-b}, $|b(t)|$ is periodic, but not 
  $b(t)$, unless $|b_0|=1$ and $\IM A\equiv -0$.
\smallbreak

As pointed out above, the relations $\beta_0=0$, $\alpha_0=-2\lambda$
imply $r\equiv 1$. This provides a stationary solution,
\begin{equation*}
  u(t,x) =  \sqrt  e \, e^{\lambda |x|^2}.
\end{equation*}
In view of the remarkable scaling property
\eqref{eq:scaling}, and of 
the tensorization property discussed in the introduction, we
infer the existence of 
infinitely many standing waves in $\R^d$ (we had assumed $d=1$ so far),
parametrized by $\omega\in \R$ (related to $k\in \R$ in \eqref{eq:scaling}
through the relation $\omega = -\lambda \ln (k^2)$),
\begin{equation*}
  u_\omega(t,x) = e^{i\omega t} e^{\frac{d}{2} -
    \frac{\omega}{2\lambda}}e^{\lambda |x|^2}. 
\end{equation*}
These standing waves, discovered in \cite{BiMy76} (see also
\cite{BiMy79}), are known as \emph{Gaussons}. Given $\omega\in \R$ is
arbitrary, for each prescribed mass $M$, there exists a (unique)
Gausson whose mass is equal to $M$. 
\subsection{Dispersive case: $\lambda>0$}

In this case, $r$ is strictly (but not uniformly) convex. We can prove
the following: there exists $T\ge 0$ such that for $t\ge T$, $\ddot r>0$, and $r(t)\to \infty$ as $t\to
\infty$. Therefore, the dynamics is expected to be well approximated by
\begin{equation*}
  \ddot r_{\rm eff} = 
   \frac{2\lambda\alpha_0}{r_{\rm eff}}\quad (\alpha_0>0).
\end{equation*}
Multiply by $\dot  r_{\rm eff} $ and integrate: since (at least) for
$t\gg 1$, $\dot  r_{\rm eff} >0$, we find
\begin{equation*}
  \dot r_{\rm eff}= \sqrt{C_0 + 4\lambda \alpha_0\ln r_{\rm eff}},
\end{equation*}
Separate variables:
\begin{equation*}
  \int^{r_{\rm eff}} \frac{dz}{\sqrt{C_0 + 4\lambda \alpha_0\ln z}}=t-T.
\end{equation*}
Set $y= \sqrt{C_0 + 4\lambda \alpha_0\ln z}$. The left hand
side becomes
\begin{equation*}
  \frac{1}{2\lambda \alpha_0}\int^Y e^{(y^2-C_0)/(4\lambda \alpha_0)}dy.
\end{equation*}
The asymptotics of Dawson function (see e.g. \cite{AbSt64}) yields,
for any $x_0\in \R$,
\begin{equation*}
  \int_{x_0}^x e^{y^2} dy\Eq x \infty \frac{1}{2x }e^{x^2}\Longrightarrow
 \frac{r_{\rm eff}}{\sqrt{C_0 + 4\lambda \alpha_0\ln
       r_{\rm eff}}} \Eq t \infty t.
\end{equation*}
Since $r_{\rm eff}\to \infty$,
 $\dis  \frac{r_{\rm eff}}{\sqrt{4\lambda \alpha_0\ln
       r_{\rm eff}}} \Eq t \infty t,$ 
hence
\begin{equation*}
  r_{\rm eff}(t)\Eq t \infty 2t\sqrt{\lambda \alpha_0\ln t}.
\end{equation*}
We note that $C_0$ has disappeared, \emph{at leading order}. Thus, all the
Gaussian solutions have the same asymptotic profile, with a
nonstandard dispersion (enhanced compared to the standard one, by a
logarithmic factor). Up to scaling the solution and changing
initial data, we can simply 
consider $\dis \ddot \tau= 
  \frac{2\lambda}{\tau}$, with $\tau(0)=1$ and $\dot \tau(0)=0$. This
  yields a uniform dispersion in the case of Gaussian data. We will
  see that this dispersion is actually completely general, in the case
  $\lambda>0$. 

\section{Solitons}

In this section, we always assume $\lambda<0$. The formal discussion presented in Section~\ref{sec:logNLS} suggests that when
$\lambda<0$, dispersion is impossible. This has been proven rigorously
by variational arguments by Thierry Cazenave:
 \begin{lemma}[\cite{Caz83}]
  Let $\lambda<0$ and $k<\infty$ such that
\[
L_k := \{u\in W, \ \|u\|_{L^2(\R^d)}=1,\ E(u)\le k\}\not
  =\emptyset \, .\]
 Then
 $\dis  \inf_{{u\in L_k}\atop {1\le p\le \infty}}\|u\|_{L^p(\R^d)}>0 .$
\end{lemma}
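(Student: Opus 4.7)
The plan is to exploit that under $\|u\|_{L^2}=1$ the measure $d\mu := |u(x)|^2 dx$ is a probability measure on $\R^d$, so that Jensen's inequality provides a natural link between the $L^p$-norms of $u$ and the entropy-like quantity $\int |u|^2 \ln|u|^2 dx$ appearing in $E(u)$. Applied to the concave function $\log$ and the nonnegative random variable $|u|^{p-2}$, Jensen gives, for every $p \in [1, \infty)$,
\begin{equation*}
p \log \|u\|_{L^p} = \log \int_{\R^d} |u|^{p-2} d\mu \ge \int_{\R^d} \log |u|^{p-2} d\mu = \frac{p-2}{2}\int_{\R^d} |u|^2 \ln|u|^2 dx.
\end{equation*}
So once I control this entropy integral from both sides on $L_k$, I obtain a uniform lower bound on $\log\|u\|_{L^p}$.

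The lower bound on the entropy integral is immediate: from $E(u) \le k$, $\lambda < 0$, and $\|\nabla u\|_{L^2}^2 \ge 0$, one reads off $\int |u|^2 \ln|u|^2 dx \ge -k/|\lambda|$. For the upper bound I would first establish a uniform $H^1$-bound on $L_k$, using exactly the Gagliardo-Nirenberg absorption trick already carried out above for the Cauchy problem. Splitting according to $\{|u| \le 1\}$ (where $|u|^2 \ln|u|^2 \le 0$) and $\{|u|>1\}$ (where $\ln|u|^2 \le C_\epsilon |u|^\epsilon$), and using $\|u\|_{L^{2+\epsilon}}^{2+\epsilon} \lesssim \|\nabla u\|_{L^2}^{\epsilon d/2}$ (since $\|u\|_{L^2}=1$), the energy inequality $\tfrac12 \|\nabla u\|_{L^2}^2 \le k + |\lambda|\int |u|^2\ln|u|^2 dx$ closes up for any $\epsilon < 4/d$ and yields simultaneously $\|u\|_{H^1} \le C_1(k,\lambda)$ and $\int |u|^2 \ln|u|^2 dx \le C_2(k,\lambda)$.

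With both bounds in hand the Jensen estimate splits cleanly: for $p \in (2,\infty)$ the coefficient $(p-2)/(2p) \in (0, 1/2)$ meets the lower bound of the entropy integral and gives $\log\|u\|_{L^p} \ge -k/(2|\lambda|)$, uniform in $p$ and $u$; for $p \in [1,2)$ the coefficient is negative but bounded below by $-1/2$, and, paired with the upper bound $C_2$, yields $\log\|u\|_{L^p} \ge -C_2/2$; the case $p=2$ is the normalization itself, and $p=\infty$ is recovered by letting $p \to \infty$ in the bound for $p > 2$. The one genuine obstacle is the $H^1$-bound on $L_k$: the sign-indefinite entropy term has to be absorbed into the gradient term, which is not automatic from $E(u)\le k$ alone. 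Since this is exactly the Gagliardo-Nirenberg absorption already performed above, it transfers essentially verbatim, and everything else reduces to a direct application of Jensen's inequality to the probability measure $|u|^2 dx$.
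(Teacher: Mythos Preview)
The paper does not include a proof of this lemma; it is simply stated with a citation to \cite{Caz83} and then used to rule out dispersion when $\lambda<0$. Your Jensen-based argument is correct and is essentially Cazenave's original proof: with $\|u\|_{L^2}=1$ the measure $|u|^2\,dx$ is a probability measure, Jensen applied to the concave logarithm gives $\log\|u\|_{L^p}\ge\frac{p-2}{2p}\int|u|^2\ln|u|^2\,dx$, and the entropy integral is two-sided bounded on $L_k$ by the energy constraint together with the Gagliardo--Nirenberg absorption already carried out in the paper. Since $|(p-2)/(2p)|\le 1/2$ for all $p\ge 1$, the bound is uniform in $p$; the endpoint $p=\infty$ is harmless because either $\|u\|_{L^\infty}=\infty$ or it is recovered as the limit of the finite-$p$ norms.
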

In view of the conservation of the mass and the energy, this implies
that no solution is dispersive in the case $\lambda<0$ (otherwise an
$L^p$-norm would go to zero for some $p>2$). 
\subsection{Gaussons}
As we have seen above, we have explicit standing waves, called
Gaussons, given by the formula
\begin{equation*}
  u_\omega(t,x) = e^{i\omega t} e^{\frac{d}{2} -
    \frac{\omega}{2\lambda}}e^{\lambda |x|^2}.
\end{equation*}
Due to the Galilean invariance \eqref{eq:galilee}, such solutions are
not asymptotically 
stable: multiplying $u_\omega(0,x)$ by $e^{i{\mathbf v}\cdot x}$, for some small
${\mathbf v}$, is a small perturbation in $H^1$, but the drift in space, given
by ${\mathbf v} t$, shows that the corresponding solution does not remain close
to $u_\omega$ for all time. Even in the radial case (where Galilean
invariance is absent), it is necessary to take phase shifts into
account, as noticed in \cite{Caz83} by an explicit example on Gaussian
data, which is related to \eqref{eq:scaling}: for $|\omega-\omega'|\ll
1$, $u_\omega(0,\cdot)$ and $u_{\omega'}(0,\cdot)$ are initial data
which are close to each other in $H^1(\R^d)$, but of course the
corresponding solutions $u_\omega$ and $u_{\omega'}$ present a
non-negligible phase shift e.g. for $(\omega-\omega')t = \pi$. On the other hand, Gaussons 
are \emph{orbitally stable}: this was proven in \cite{Caz83} for the
radial case, and in \cite{Ar16} for
the general case (the key variational step there is based on the
logarithmic Sobolev inequality). 

\begin{theorem}[\cite{Caz83,Ar16}]
  Let $\lambda<0$ and $\omega\in \R$. Set
  \begin{equation*}
    \phi_\omega(x) =  e^{\frac{d}{2} -
    \frac{\omega}{2\lambda}}e^{\lambda |x|^2}.
  \end{equation*}
  For any $\eps>0$, there exists $\eta>0$ such that if $u_0\in W$
satisfies $\|u_0-\phi_\omega\|_W<\eta$, then the solution $u$ to
\eqref{eq:logNLS} exists for all $t\in \R$, and
  \begin{equation*}
    \sup_{t\in \R}\inf_{\theta\in \R}\inf_{y\in \R^d}\|u(t)
    -e^{i\theta}\phi_\omega(\cdot-y)\|_W<\eps.
  \end{equation*}
\end{theorem}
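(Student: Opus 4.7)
The strategy is the classical Cazenave--Lions variational approach to orbital stability: argue by contradiction, reduce to a compactness statement for minimizing sequences of an action functional, and exploit the sharp logarithmic Sobolev inequality --- whose extremizers are precisely Gaussians --- to identify the minimizers with the orbit of $\phi_\omega$. I would introduce the action $S_\omega(u) := E(u) + \omega M(u)$ (with $E$ the full form of the energy featuring the $\ln|u|^2-1$ weight), so that $\phi_\omega$ is a critical point and its Euler--Lagrange equation reproduces the stationary equation it solves.

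The first substantive step is the variational characterization. Using Weissler's sharp log-Sobolev inequality
\[ \int_{\R^d} |u|^2 \ln|u|^2 \, dx \leq \|u\|_{L^2}^2 \ln \|u\|_{L^2}^2 + \frac{a^2}{\pi}\|\nabla u\|_{L^2}^2 - d(1+\ln a)\|u\|_{L^2}^2, \quad a > 0, \]
with equality iff $u$ is a Gaussian of width $\sim a$, one optimizes $a$ so that the extremal Gaussian matches $\phi_\omega$ and concludes that on the mass-constrained set $\{u \in W : M(u) = M(\phi_\omega)\}$, the functional $S_\omega$ attains its infimum precisely on the orbit $\mathcal{O}(\phi_\omega) := \{e^{i\theta}\phi_\omega(\cdot - y) : \theta \in \R,\, y \in \R^d\}$.

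Next comes the key compactness lemma: if $(v_n)\subset W$ satisfies $M(v_n) \to M(\phi_\omega)$ and $S_\omega(v_n) \to S_\omega(\phi_\omega)$, then, up to a subsequence, there exist $y_n \in \R^d$ and $\theta_n \in \R$ with $e^{-i\theta_n} v_n(\cdot - y_n) \to \phi_\omega$ in $W$. Log-Sobolev yields a uniform $H^1$-bound on $(v_n)$; concentration-compactness (or a direct translate-and-extract scheme) produces a weak limit, and lower semicontinuity of the kinetic term combined with the variational identity forces this limit into $\mathcal{O}(\phi_\omega)$. The strict inequality in log-Sobolev away from Gaussians upgrades weak to strong $H^1$-convergence, and convergence of $\int |v_n|^2 \ln |v_n|^2$ then follows from the convergence of $S_\omega(v_n)$ and $\|\nabla v_n\|_{L^2}^2$, yielding convergence in $W$.

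Finally I would close by contradiction: if orbital stability fails, there exist $\eps > 0$, data $u_{0,n} \to \phi_\omega$ in $W$, and times $t_n$ with $\inf_{\theta,y}\|u_n(t_n) - e^{i\theta}\phi_\omega(\cdot - y)\|_W \geq \eps$. Continuity of $M$ and $E$ on $W$ together with their conservation give $M(u_n(t_n)) \to M(\phi_\omega)$ and $E(u_n(t_n)) \to E(\phi_\omega)$, so $v_n := u_n(t_n)$ is a minimizing sequence for $S_\omega$, and the compactness lemma contradicts the lower bound $\eps$. The main obstacle is the compactness step: $u \mapsto \int |u|^2 \ln|u|^2$ is only lower semicontinuous under weak $H^1$ convergence, and no Brezis--Lieb-type splitting is available for a logarithmic nonlinearity. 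The decisive input is the rigidity of log-Sobolev --- extremizers are exactly Gaussians of a prescribed width --- which rules out vanishing and dichotomy in concentration-compactness and allows passage to the limit in the logarithmic integral; this is the ingredient distinguishing Ardila's general argument from Cazenave's earlier proof in the radial setting.
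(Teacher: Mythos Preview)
The paper does not actually prove this theorem: it is quoted from \cite{Caz83,Ar16}, and the only indication given about the argument is the remark preceding the statement that ``the key variational step there is based on the logarithmic Sobolev inequality.'' Your sketch is consistent with that hint and with the Cazenave--Lions framework, so there is nothing in the paper to compare against beyond that one-line description.

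That said, a couple of points in your outline deserve care if you want it to stand on its own. First, on the mass-constrained set, minimizing $S_\omega$ is the same as minimizing $E$, so the phrasing is fine, but you should be explicit that the log-Sobolev inequality (with the sign $\lambda<0$) gives a \emph{lower} bound on $E$ at fixed mass, with equality exactly on Gaussians of the correct width; this is what pins down the minimizing orbit. Second, the genuinely delicate step is the one you flag at the end: ruling out dichotomy in concentration-compactness when the potential term $\int |u|^2\ln|u|^2$ is not a pure power. The rigidity of log-Sobolev equality is indeed the right tool, but the argument requires more than lower semicontinuity --- one needs that a nontrivial splitting strictly raises the action, which follows from the strict concavity (in mass) of the minimal action, itself a consequence of the equality case in log-Sobolev. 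Your sketch gestures at this but does not quite say it. Finally, convergence in $W$ (not just $H^1$) at the end requires controlling the logarithmic integral; you deduce this from convergence of $S_\omega$ and of $\|\nabla v_n\|_{L^2}$, which is correct provided you have already upgraded to strong $H^1$ convergence.
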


\subsection{Superposition}

Numerical experiments reported in \cite{BCST19b} reveal dynamical
properties which are fairly different from the phenomena observed in
the case of a power-like nonlinearity, when solitons interact. In
particular, two Gaussons centered far apart seem motionless over a
long time of simulation. Each Gausson solves \eqref{eq:logNLS}
exactly, but the equation being nonlinear, the sum of two Gaussons
does not solve \eqref{eq:logNLS}: there seems to be a rather strong
superposition principle however. This was proven rigorously in \cite{FeDCDS},
in a more general framework: starting from finitely many initial
Gaussians (not necessarily Gaussons) with pairwise distances of order at least $
1/\eps$ (for $0<\eps\ll 1$), the solution of \eqref{eq:logNLS} is well
approximated by 
the sum of the 
corresponding solutions (computed in Section~\ref{sec:gaussian}), over a time $o(\eps^{-2})$. More precisely, the error is of
order $e^{c_0t-c_1/\eps^2}$ for some constants $c_0,c_1>0$ expressed explicitly
in \cite{FeDCDS}.
\smallbreak

At this stage, we emphasize an aspect which is crucial in the next
section too: the logarithmic nonlinearity is not Lipschitz continuous
at the origin, and in particular any linearization process becomes
very delicate. To overcome this difficulty, the strategy employed in
\cite{FeDCDS} consists in establishing fine properties of the
logarithmic nonlinearity. Typically, the nonlinearity
$F(z):=z\ln|z|^2$ satisfies, for $|z|,|z'|\le 1$, $z\not =0$,
\begin{equation}\label{eq:GuillaumeDCDS}
  |F(z)-F(z')|\le |z-z'|\(6-\ln|z|^2\).
\end{equation}
The interest of this estimate lies in the fact that it is not
symmetric in $(z,z')$. This is crucial in order to estimate
the source term in the equation solved by the difference between the
exact solution and the sum of individual Gaussian solutions, which is
of the form
\begin{equation*}
  u\ln |u|^2 - \sum_{j=1}^N g_j\ln|g_j|^2.
\end{equation*}
\subsection{Multigaussons}

Again, we do not make complete statements here, and try to give a
flavor of the corresponding result. 
Using the Galilean invariance, introduce, for some $k\ge 1$
\begin{equation*}
  G_k= \sum_{j=1}^k \Gamma_j(t,x), \quad
  \mathbb B_k =\sum_{j=1}^k B_j(t,x),
\end{equation*}
where the $\Gamma_j$'s are Gaussons associated with pairwise different velocities
${\mathbf v}_j$, and the $B_j$'s are (more general) breathers associated with
pairwise different velocities ${\mathbf v}_j$. Unlike in \cite{CoteLeCoz2011}, it is not
assumed that the relative velocities ${\mathbf v}_j-{\mathbf v}_k$, $j\not = k$, are large. As evoked above, linearization
is a delicate process here, and so, even though the following
statement is reminiscent of \cite{MartelMerle2006} or \cite{Schuur1986} (for the
modified KdV equation, an equation which is integrable), the
approach must be adapted:
\begin{theorem}[\cite{FeAIHP}]
  Let $d\ge 1$ and $\lambda<0$. 
  \begin{itemize}
  \item Multibreathers: there exists a unique solution $u\in C_b(\R;W)$ to
    \eqref{eq:logNLS}, and $c,C>0$ such that
    \begin{equation*}
      \|u(t)-\mathbb B_k(t)\|_{L^2(\R^d)}\le C e^{-ct^2},\quad t\ge 0.
    \end{equation*}
   \item Multigaussons:  there exists a unique solution $u\in C_b(\R;\Sigma)$ to
    \eqref{eq:logNLS}, $c,C>0$ such that
    \begin{equation*}
      \|u(t)-G_k(t)\|_{\Sigma}\le C e^{-ct^2},\quad t\ge 0.
    \end{equation*}
  \end{itemize}
\end{theorem}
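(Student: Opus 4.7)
The theorem is the logNLS analog of the Martel--Merle construction \cite{MartelMerle2006} for multi-solitons, adapted to a non-integrable setting without Lipschitz nonlinearity. The plan is to build the desired solution as a limit of solutions with prescribed terminal data: fix $T_n\to+\infty$ and let $u_n$ be the unique global solution to \eqref{eq:logNLS} with $u_n(T_n)=\mathbb B_k(T_n)$ (resp.\ $G_k(T_n)$), which exists by Theorem~\ref{theo:GWP}. The core is a uniform backward estimate
\[
\|u_n(t)-\mathbb B_k(t)\|_{L^2(\R^d)}\le Ce^{-ct^2},\qquad t\in[0,T_n],
\]
(and the analogous $\Sigma$-bound in the multigausson case), from which a compactness extraction produces the desired solution.

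\textbf{Uniform backward estimate.} Writing $w_n=u_n-\mathbb B_k$ and $F(z)=z\ln|z|^2$, one has
\[
i\partial_t w_n+\tfrac12\Delta w_n=\lambda\bigl(F(u_n)-F(\mathbb B_k)\bigr)+\lambda\mathcal R_k,\qquad \mathcal R_k:=F\bigl(\textstyle\sum_j B_j\bigr)-\sum_j F(B_j).
\]
The source $\mathcal R_k$ encodes the defect of superposition. Because the $B_j$ are Gaussian profiles with pairwise distinct velocities ${\mathbf v}_j$, their centers are separated at time $t$ by distances $\gtrsim|{\mathbf v}_j-{\mathbf v}_l|\,t$, so the cross products satisfy $\|B_j\bar B_l\|_{L^2}\lesssim e^{-c t^2}$; together with the slow growth of $F$, this yields $\|\mathcal R_k(t)\|_{L^2}\le Ce^{-ct^2}$ (and similarly in $\Sigma$). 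For the nonlinear difference, Lemma~\ref{lem:unique} gives
\[
\frac{d}{dt}\|w_n(t)\|_{L^2}^2\le 8|\lambda|\,\|w_n\|_{L^2}^2+C\|w_n\|_{L^2}\,\|\mathcal R_k\|_{L^2},
\]
while $H^1$ and weighted $L^2$ increments are controlled through the non-symmetric estimate \eqref{eq:GuillaumeDCDS}, which requires no Lipschitz regularity of $F$ at $0$. Starting from $w_n(T_n)=0$ and integrating backwards, for $t\ge T^*$ with $T^*$ large, the Duhamel bound $\|w_n(t)\|_{L^2}\lesssim\int_t^{T_n}e^{c_0(s-t)}e^{-cs^2}\,ds\le C'e^{-c't^2}$ closes the argument on $[T^*,T_n]$, and standard well-posedness propagates the estimate down to $t=0$.

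\textbf{Limit and uniqueness.} The uniform bound on $u_n$ in $W$ (resp.\ $\Sigma$), combined with the control on $\partial_t u_n$ drawn from the equation, gives local strong compactness in $L^2$ by Aubin--Lions. Extracting along a subsequence, $u_n\to u$ in $C_{\rm loc}(\R;L^2_{\rm loc})$ and weakly-$*$ in $L^\infty_{\rm loc}(\R;W)$; the Cauchy theory of \cite{CaHa80,CaGa18} identifies $u$ as a solution of \eqref{eq:logNLS}, and the pointwise bound passes to the limit. For uniqueness, if $u^{(1)}$ and $u^{(2)}$ both satisfy the decay, then $w=u^{(1)}-u^{(2)}$ has no source, Lemma~\ref{lem:unique} gives $\frac{d}{dt}\|w\|_{L^2}^2\le 8|\lambda|\|w\|_{L^2}^2$, and Gronwall on $[t,T]$ yields
\[
\|w(t)\|_{L^2}^2\le \|w(T)\|_{L^2}^2\,e^{8|\lambda|(T-t)}\le 4C^2\,e^{-2cT^2+8|\lambda|(T-t)}\xrightarrow[T\to\infty]{}0,
\]
whence $w\equiv 0$. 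The super-exponential rate $e^{-ct^2}$ is precisely what defeats the linear Gronwall growth; an ordinary exponential approach would not suffice.

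\textbf{Main obstacle.} The delicate step is the uniform backward estimate for $w_n$. The logarithmic nonlinearity fails to be Lipschitz at the origin, so one cannot linearize $F$ around $\mathbb B_k$---a problem made acute by the fact that the error $w_n$ may live precisely where $\mathbb B_k$ is exponentially small (far from all breather centers). Circumventing this demands the asymmetric estimate \eqref{eq:GuillaumeDCDS}, whose one-sided logarithmic loss is absorbed only because $\mathcal R_k$ has super-exponentially small tails in the same regime; propagating this argument at the level of $H^1$ and weighted $L^2$, while preserving the rate $e^{-ct^2}$, is the technical heart of \cite{FeAIHP}.
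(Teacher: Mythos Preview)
Your overall scaffolding --- solve backward from terminal data $u_n(T_n)=\mathbb B_k(T_n)$ (resp.\ $G_k(T_n)$), prove a uniform backward estimate, extract by compactness, and conclude uniqueness via Lemma~\ref{lem:unique} together with the super-exponential rate --- is exactly the Martel--Merle scheme the paper invokes, and your uniqueness argument is correct as written.

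The substantive difference lies in how the uniform backward estimate is obtained. The paper stresses that the proof of \cite{FeAIHP} relies on \emph{localized energy functionals}, built from a linearized functional which is \emph{not} the linearized energy; this Lyapunov-type device is what produces the control on $w_n$ in the stronger norms. You instead run a direct $L^2$ Gronwall via Lemma~\ref{lem:unique}, and then assert that ``$H^1$ and weighted $L^2$ increments are controlled through \eqref{eq:GuillaumeDCDS}.'' For the $L^2$ multibreather estimate your route is legitimate, and arguably simpler than an energy approach: Lemma~\ref{lem:unique} is a genuine $L^2$ stability property specific to logNLS (it has no analogue for power nonlinearities), and combined with the super-exponential decay of $\mathcal R_k$ it closes the backward Duhamel integral exactly as you write.

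For the $\Sigma$ multigausson estimate, however, there is a gap. The asymmetric bound \eqref{eq:GuillaumeDCDS} controls $|F(z)-F(z')|$ pointwise, not $\nabla\bigl(F(u_n)-F(G_k)\bigr)$: differentiating the equation for $w_n$ brings in factors $\ln|u_n|^2$ multiplying $\nabla u_n$, and these are unbounded precisely where $u_n$ is small --- the regime far from all Gausson centers that you yourself flag as problematic. A direct $H^1$ Gronwall on $w_n$ does not close, and \eqref{eq:GuillaumeDCDS} alone does not rescue it. The localized energy functionals of \cite{FeAIHP}, with their tailored linearized part, are introduced exactly to bypass this: one does not estimate $\nabla w_n$ by differentiating the equation, but instead exploits monotonicity of a well-designed functional to obtain $H^1$ and momentum control with the correct $e^{-ct^2}$ rate. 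Your final paragraph correctly names the obstacle, but the mechanism you sketch to resolve it at the $\Sigma$ level is not the one that actually works.
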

We emphasize a few aspects, and refer to \cite{FeAIHP} for details:
\begin{itemize}
\item The construction is based on compactness techniques, as introduced in
  \cite{MartelMerle2006}.
 \item The linearized operator around the Gausson seems to be nice, as
   it is a 
   harmonic oscillator, whose eigenproperties are very well known. However, the logarithm is singular at zero,
   and so linearizing becomes a delicate matter. Like in the previous
   section, a clever use of \eqref{eq:GuillaumeDCDS} saves the day.
 \item The proof uses localized energy functionals  involving a
   linearized functional, 
   which is not the linearized energy. 
\end{itemize}
\section{Dispersive case}\label{sec:disp}

We now assume $\lambda>0$, and focus on the following result:

\begin{theorem}[\cite{CaGa18}]\label{theo:logNLSdisp}
  Let $\lambda>0$. Introduce the solution $\tau\in C^\infty(\R)$ to
  \begin{equation}\label{eq:tau-libre}
  \ddot \tau = \frac{2\lambda }{\tau} \, ,\quad \tau(0)=1\, ,\quad \dot
  \tau(0)=0\, .
\end{equation}
Then, as
$t\to \infty$, 
$\tau(t)\sim 2t \sqrt{\lambda \ln t}$ and $ \dot
  \tau(t)\sim 2\sqrt{\lambda\ln  t} $. For $u_0\in
  \Sigma\setminus\{0\}$, \eqref{eq:logNLS} 
  has a unique solution $u\in L^\infty_{\rm
    loc}(\R;\Sigma))$. 
Introduce
$\gamma(x):=e^{-|x|^2/2}$, and 
 rescale the solution  to $v=v(t,y)$ by setting
\begin{equation}
  \label{eq:uvDMJ}
  u(t,x)
  =\frac{1}{\tau(t)^{d/2}}v\left(t,\frac{x}{\tau(t)}\right)
\frac{\|u_0\|_{L^2({\mathbb R}^d)}}{\|\gamma\|_{L^2({\mathbb R}^d)}} 
\exp \Big({i\frac{\dot\tau(t)}{\tau(t)}\frac{|x|^2}{2}} \Big) . 
\end{equation}
Then we have
\begin{equation}
  \label{eq:apv}
  \sup_{t\ge 0}\left(\int_{{\mathbb R}^d}\left(1+|y|^2+\left|\ln
    |v(t,y)|^2\right|\right)|v(t,y)|^2dy +\frac{1}{\tau(t)^2}\|\nabla_y
  v(t)\|^2_{L^2}\right)<\infty,
\end{equation}
\begin{equation}\label{eq:moments}
   \int_{{\mathbb R}^d}
  \begin{pmatrix}
    1\\
y\\
|y|^2
  \end{pmatrix}
|v(t,y)|^2dy\Tend t \infty 
 \int_{{\mathbb R}^d}
  \begin{pmatrix}
    1\\
y\\
|y|^2
  \end{pmatrix}
  \gamma^2(y)dy ,
\end{equation}
and
\begin{equation*}
  |v(t,\cdot)|^2 \mathop{\rightharpoonup}\limits_{t\to \infty}
  \gamma^2 
\quad  \text{weakly in }L^1({\mathbb R}^d)  . 
\end{equation*}
\end{theorem}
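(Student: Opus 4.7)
The plan is to substitute the ansatz \eqref{eq:uvDMJ} into \eqref{eq:logNLS} and derive an equation for $v$ in which the ODE \eqref{eq:tau-libre} precisely cancels the quadratic contribution of the phase $e^{i\dot\tau |x|^2/(2\tau)}$. A direct computation, invoking $\ddot\tau=2\lambda/\tau$ at the appropriate step, should yield
\begin{equation*}
  i\d_t v + \frac{1}{2\tau(t)^2}\Delta_y v = \lambda \ln |v|^2\, v + \theta(t)\, v,
\end{equation*}
where $\theta$ is a real function of time that can be absorbed into a global phase, and the normalization in \eqref{eq:uvDMJ} ensures $\|v(t)\|_{L^2}^2=\|\gamma\|_{L^2}^2$ by mass conservation for $u$. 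The stated asymptotics of $\tau$ and $\dot\tau$ follow from the first integral $(\dot\tau)^2=4\lambda\ln\tau$ (obtained by multiplying \eqref{eq:tau-libre} by $\dot\tau$) and separating variables, exactly as in the Gaussian computation of Section~\ref{sec:gaussian} via Dawson's function.

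For \eqref{eq:apv}, I would introduce the pseudo-energy
\begin{equation*}
  \mathcal{F}(t) := \frac{1}{\tau(t)^2}\|\nabla_y v(t)\|_{L^2}^2 + 2\lambda \int_{\R^d} |v|^2\(|y|^2 + \ln |v|^2\)dy,
\end{equation*}
essentially the original conserved energy $E(u)$ written in the $v$-variables after subtracting a purely $\tau$-dependent term. Differentiating $\mathcal{F}$ in time, using the equation for $v$ and integrating by parts, the computation should collapse to
\begin{equation*}
  \frac{d}{dt}\mathcal{F}(t) = -\frac{2\dot\tau(t)}{\tau(t)^3}\|\nabla_y v(t)\|_{L^2}^2 \le 0.
\end{equation*}
Combined with the Euclidean logarithmic Sobolev inequality at the fixed mass $\|\gamma\|_{L^2}^2$, which bounds $\int(|y|^2+\ln|v|^2)|v|^2$ from below, this monotonicity keeps $\mathcal{F}$ bounded, hence each non-negative summand uniformly bounded in time, which gives \eqref{eq:apv}.

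To establish \eqref{eq:moments} and the weak convergence to $\gamma^2$, I would compute $\frac{d}{dt}\int y_j|v|^2 dy$ and $\frac{d}{dt}\int|y|^2|v|^2 dy$ from the equation for $v$; each derivative is controlled by powers of $1/\tau$ times $\|\nabla_y v\|_{L^2}$ and is therefore integrable in $t$ thanks to \eqref{eq:apv} and the asymptotics of $\tau$, so the limits exist. Tightness from \eqref{eq:apv} yields weak compactness of $|v(t,\cdot)|^2$ in $L^1(\R^d)$; any weak cluster point $\rho\ge 0$ has mass $\|\gamma\|_{L^2}^2$ and, by monotone convergence of $\mathcal{F}$ together with weak lower semicontinuity, achieves the infimum of $\int(|y|^2+\ln\rho)\rho\, dy$ at fixed mass, i.e.\ saturates the logarithmic Sobolev inequality, which forces $\rho=\gamma^2$; uniqueness of the cluster point upgrades subsequential to full weak convergence. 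The hardest steps look to be (i) producing the precise cancellation behind the monotonicity of $\mathcal{F}$, which hinges sensitively on the choice of scaling and of the quadratic phase in \eqref{eq:uvDMJ}, and (ii) identifying the weak limit purely by variational means, since linearization around $\gamma$ is barred by the lack of Lipschitz regularity of $z\mapsto z\ln|z|^2$ at the origin.
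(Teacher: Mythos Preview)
Your derivation of the equation for $v$ and of the monotonicity $\dot{\mathcal F}=-\frac{2\dot\tau}{\tau^3}\|\nabla_y v\|_{L^2}^2$ match the paper. Beyond that, three genuine gaps prevent the argument from closing.

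\textbf{A priori bound.} Non-negativity of the relative entropy (this is what your ``logarithmic Sobolev'' step delivers) only shows that the two non-negative summands $\frac{1}{\tau^2}\|\nabla_y v\|_{L^2}^2$ and $\mathcal E_{\rm ent}=\int(|y|^2+\ln|v|^2)|v|^2$ are bounded. But \eqref{eq:apv} asks for $\int |y|^2|v|^2$ and $\int |v|^2\,\bigl|\ln|v|^2\bigr|$ \emph{separately}, and these are not the non-negative pieces of $\mathcal F$. The paper does not use entropy positivity at all (it remarks explicitly that it will not); instead it splits $\mathcal E$ at $|v|=1$ and controls the negative part $\int_{|v|<1}|v|^2\ln(1/|v|^2)\le C_\eps\int|v|^{2-\eps}$ by the moment interpolation \eqref{eq:GNdual}, which feeds back a small power of $\|\,|y|v\,\|_{L^2}$ and closes.

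\textbf{Moments.} Your integrability claim fails. One has $\frac{d}{dt}\int y_j|v|^2=\tau^{-2}\,\IM\!\int\bar v\,\d_{y_j}v$, and with only $\tau^{-1}\|\nabla_y v\|_{L^2}=\O(1)$ this is $\O(1/\tau)$; since $1/\tau(t)\sim (2t\sqrt{\lambda\ln t})^{-1}\notin L^1(1,\infty)$, the derivative is not integrable (the same obstruction hits $\frac{d}{dt}\int|y|^2|v|^2$). The paper proceeds differently: for the first moment it derives the closed system $\dot I_1=-2\lambda I_2$, $\dot I_2=\tau^{-2}I_1$, observes that $\tau I_2$ is affine in $t$, and reads off $I_2\to 0$; for the second moment it rewrites the \emph{conserved} energy $E(u)$ in the $v$-variables and uses $(\dot\tau)^2=4\lambda\ln\tau$ to extract $\|yv(t)\|_{L^2}^2-\|y\gamma\|_{L^2}^2=\O((\ln t)^{-1/2})$.

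\textbf{Identification of the weak limit.} Your variational step requires the entropy to reach its minimum, i.e.\ $\mathcal E_{\rm ent}(t)\to 0$. Monotonicity of $\mathcal F$ gives only $\mathcal F(t)\to L$ for some $L\ge 0$, with no mechanism forcing the kinetic part to absorb all of $L$; weak lower semicontinuity then yields $\int(|y|^2+\ln\rho_\infty)\rho_\infty\le \liminf\mathcal E_{\rm ent}$, which need not vanish. The paper explicitly notes that $\mathcal E_{\rm ent}\to 0$ is open (and would give \emph{strong} $L^1$ convergence). Its route is entirely different: write the Madelung system for $\rho=|v|^2$, $J=\IM(\bar v\nabla v)$, pass to a slow time $s\sim\frac14\ln\ln t$, and show that any weak $L^1$ cluster point $\tilde\rho_\infty$ of $\rho(\cdot+s_n)$ solves the Fokker--Planck equation $\d_s\tilde\rho_\infty=\nabla\!\cdot(\nabla+2y)\tilde\rho_\infty$, while the dissipation bound \eqref{eq:integralkin} forces $\d_s\tilde\rho_\infty=0$; the only stationary state compatible with the a priori bounds is $\gamma^2$.
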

The last bound in \eqref{eq:apv} shows that the phase introduced in
\eqref{eq:uvDMJ} incorporates the main oscillations in the large time
limit: since $\dot\tau(t)/\tau(t)\sim 1/t$ as $t\to \infty$, we
recover the same oscillation (at leading order) as for the linear
Schr\"odinger equation, see Section~\ref{sec:LS}. On the other hand,
the dispersive rate is different: the boundedness of the momentum of
$v$ shows that $v$ 
is not dispersive, and the factor $t$ present in the expression of
$A(t)$ in Section~\ref{sec:LS}  has been replaced by
$\tau(t)$. The dispersion of $u$ is thus enhanced by a
logarithmic factor, compared to the standard dispersion. Finally,
$|v(t,\cdot)|^2$ has a universal limit, which is reminiscent of the
heat equation rather than of the Schr\"odinger equation.
\smallbreak

As a consequence, the Sobolev norms of every nontrivial solutions are
unbounded, providing a precise answer to a question asked in
\cite{Bo96} regarding the growth of Sobolev norms for Hamiltonian
nonlinear dispersive equations (see also e.g. \cite{Iturbulent,GG17,GLPR18,GHP16,HPTV15,ST21}): 

  \begin{corollary}\label{cor:growth}
  Let $u_0\in \Sigma\setminus\{0\}$, and
  $0<s\le 1$. As $t\to \infty$,
  \begin{equation*}
    \(\ln t\)^{s/2}\lesssim \|u(t)\|_{\dot H^s(\R^d)}\lesssim \(\ln t\)^{s/2},
  \end{equation*}
where $\dot H^s(\R^d)$ denotes the standard homogeneous Sobolev space.
\end{corollary}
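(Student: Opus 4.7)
The strategy is to first establish $\|u(t)\|_{\dot H^1}^2 \sim \ln t$ by direct computation from Theorem~\ref{theo:logNLSdisp}, deduce the upper bound at $0<s<1$ by interpolation, and obtain the fractional lower bound via a density-level analysis of the free Schr\"odinger evolution applied to the rescaled profile.

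For $s=1$, I differentiate the ansatz~\eqref{eq:uvDMJ} directly: with $C:=\|u_0\|_{L^2}/\|\gamma\|_{L^2}$ and $\alpha(t):=\dot\tau(t)/\tau(t)$, a routine chain rule and change of variables give
\begin{equation*}
 \|\nabla u(t)\|_{L^2}^2 = \frac{C^2}{\tau^2}\|\nabla_y v\|_{L^2}^2 + C^2\dot\tau^2\,\|\,|y|v\|_{L^2}^2 + \frac{2C^2\dot\tau}{\tau}\int_{\R^d} y\cdot\IM\bigl(\bar v\,\nabla_y v\bigr)\,dy.
\end{equation*}
By~\eqref{eq:apv} the first term is $O(1)$, and by Cauchy--Schwarz the third is $O(\dot\tau)$; by~\eqref{eq:moments}, $\|\,|y|v(t)\|_{L^2}^2\to\||y|\gamma\|_{L^2}^2>0$. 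Since $\dot\tau(t)\sim 2\sqrt{\lambda\ln t}$, this yields $\|u(t)\|_{\dot H^1}^2 \sim 4\lambda C^2\||y|\gamma\|_{L^2}^2\ln t$. The upper bound at $0<s<1$ then follows from $\|u\|_{\dot H^s}\le\|u\|_{L^2}^{1-s}\|u\|_{\dot H^1}^s$ and conservation of mass.

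For the lower bound at $0<s<1$, set $\tilde u:=u\,e^{-i\alpha|x|^2/2}$, so $\tilde u(t,x)=C\tau^{-d/2}v(t,x/\tau)$, and $T:=1/\alpha=\tau/\dot\tau$. Rewriting the chirp as a free Schr\"odinger propagator at time $T$, via the explicit formula recalled in Section~\ref{sec:LS} and used as in Example~\ref{ex:gaussian}, yields the identity
\begin{equation*}
 \|u(t)\|_{\dot H^s}^2 = \alpha^{2s}\,\bigl\|\,|x|^s\, g(t)\bigr\|_{L^2}^2,\qquad g(t):=e^{iT\Delta/2}\tilde u(t).
\end{equation*}
The Schr\"odinger scaling $e^{iT\Delta/2}\bigl[\tau^{-d/2}w(\cdot/\tau)\bigr] = \tau^{-d/2}\bigl[e^{i\tilde t\Delta/2}w\bigr](\cdot/\tau)$, with $\tilde t:=T/\tau^2=1/(\tau\dot\tau)$, gives $|g(t,x)|^2=\tau^{-d}|h(\tilde t,x/\tau)|^2$ where $h(\sigma):=e^{i\sigma\Delta/2}(Cv(t))$, whence after the change of variable $y=x/\tau$,
\begin{equation*}
 \|u(t)\|_{\dot H^s}^2 = \dot\tau(t)^{2s}\int_{\R^d}|y|^{2s}|h(\tilde t,y)|^2\,dy.
\end{equation*}
Fix $\delta>0$ and a smooth cutoff $\phi_\delta$ with $\phi_\delta=1$ on $\{|y|>2\delta\}$, $\phi_\delta=0$ on $\{|y|<\delta\}$, $|\nabla\phi_\delta|\le 2/\delta$. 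The continuity equation $\partial_\sigma|h|^2 = \nabla\cdot\IM(\bar h\,\nabla h)$ for the Schr\"odinger flow, together with conservation of $\|h\|_{L^2}$ and $\|\nabla h\|_{L^2}=C\|\nabla v(t)\|_{L^2}\lesssim\tau$ by~\eqref{eq:apv}, yields
\begin{equation*}
 \Bigl|\int\phi_\delta|h(\tilde t)|^2\,dy-\int\phi_\delta|Cv(t)|^2\,dy\Bigr| \le \tilde t\,\|\nabla\phi_\delta\|_\infty\,\|h\|_{L^2}\|\nabla h\|_{L^2}\lesssim \frac{1}{\delta\,\dot\tau(t)}\longrightarrow 0.
\end{equation*}
The weak convergence $|v(t)|^2\rightharpoonup\gamma^2$ in $L^1(\R^d)$ from Theorem~\ref{theo:logNLSdisp} forces $\int\phi_\delta|Cv|^2\,dy\to C^2\!\int\phi_\delta\gamma^2\,dy=:c_\delta>0$, so $\int\phi_\delta|h(\tilde t)|^2\,dy\ge c_\delta/2$ for $t$ large. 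Since $\phi_\delta\le \chi_{\{|y|>\delta\}}$, this gives $\int|y|^{2s}|h|^2\,dy \ge \delta^{2s}c_\delta/2$, and therefore $\|u(t)\|_{\dot H^s}^2\gtrsim\dot\tau(t)^{2s}\sim(\ln t)^s$.

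The main obstacle is precisely this fractional lower bound: Sobolev interpolation runs in the wrong direction, and the fractional Heisenberg inequality $\|u\|_{L^2}^2\lesssim\||x|^su\|_{L^2}\|u\|_{\dot H^s}$ combined with $\|xu\|_{L^2}\sim\tau$ only produces the far weaker bound $\|u\|_{\dot H^s}\gtrsim\tau^{-s}$. The resolution is to argue at the level of the density $|g|^2$ rather than of $g$ itself: the function $g$ is \emph{not} close to $\tilde u$ in $L^2$ since $\|\nabla v\|_{L^2}\sim\tau$, yet the Schr\"odinger flux $\IM(\bar h\,\nabla h)$ can transport mass across a fixed scale only at rate $O(1/\dot\tau)\to 0$, which combined with the weak-$L^1$ convergence $|v(t)|^2\rightharpoonup\gamma^2$ prevents $|g|^2$ from concentrating near the origin.
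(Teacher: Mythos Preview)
Your proof is correct. For $s=1$ it coincides with the paper's argument (you merely make the cross term explicit). For $0<s<1$, your upper bound by interpolation is the obvious route, and your lower bound takes a genuinely different path from the paper.

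The paper does not prove the fractional lower bound directly; it invokes \cite[Lemma~5.1]{ACMA}, which is an abstract inequality: for any $w\in W^{1,\infty}$,
\[
\| |w|^s u\|_{L^2}\le \| |D_{x}|^s u\|_{L^2}+\| (\nabla -iw)u\|_{L^2}^{s}\| u\|_{L^2}^{1-s}+K \bigl(1+\| \nabla w\|_{L^\infty}\bigr)\| u\|_{L^2},
\]
applied with $w=\alpha(t)x$. Since $(\nabla-i\alpha x)u=e^{i\alpha|x|^2/2}\nabla\tilde u$ has $L^2$ norm $\frac{C}{\tau}\|\nabla v\|=O(1)$ by \eqref{eq:apv}, and $\|\nabla w\|_{L^\infty}=\alpha\to 0$, this yields $\alpha^s\||x|^s u\|_{L^2}\lesssim \|u\|_{\dot H^s}+O(1)$; the left side is $\dot\tau^s\||y|^s v\|_{L^2}$, which is $\gtrsim\dot\tau^s$ by the weak limit \eqref{eq:moments}.

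Your alternative is more elementary and self-contained: you turn the chirp into an exact Schr\"odinger propagator to obtain the \emph{identity} $\|u(t)\|_{\dot H^s}^2=\dot\tau^{2s}\int|y|^{2s}|h(\tilde t,y)|^2dy$, and then argue at the density level. The key observation is that although $h(\tilde t)$ may be far from $Cv(t)$ in $L^2$ (since $\|\nabla v\|$ can be as large as $\tau$), the mass flux $\IM(\bar h\nabla h)$ carries at most $O(\tilde t\cdot\tau/\delta)=O(1/(\delta\dot\tau))\to 0$ across a fixed shell; combined with $|v(t)|^2\rightharpoonup\gamma^2$, this pins down a uniform lower bound on the moment. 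This argument avoids the external lemma entirely and makes transparent exactly which ingredients from Theorem~\ref{theo:logNLSdisp} are used (only the weak $L^1$ limit and the bound $\|\nabla v\|/\tau=O(1)$). The paper's route is shorter to state but black-boxes the mechanism; yours exposes it.

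One harmless slip: the continuity equation for the free flow has a minus sign, $\partial_\sigma|h|^2=-\nabla\cdot\IM(\bar h\nabla h)$, but this does not affect your estimate.
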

\begin{proof}[Proof in the case $s=1$] Differentiate \eqref{eq:uvDMJ}
  with respect to $x$:
\begin{align*}
 &\nabla u(t,x)
  =\frac{1}{\tau(t)^{d/2}}\nabla_x\(v\(t,\frac{x}{\tau(t)}\)
e^{i\frac{\dot\tau(t)}{\tau(t)}\frac{|x|^2}{2}}\)\\
& = \underbrace{\frac{1}{\tau(t)}
    \frac{1}{\tau(t)^{d/2}}\nabla_y v\(t,\frac{x}{\tau(t)}\)e^{i\frac{\dot\tau(t)}{\tau(t)}\frac{|x|^2}{2}}
}_{\|\cdot \|_{L^2} = \frac{1}{\tau}\|\nabla
  v\|_{L^2}=\O(1). }+ \underbrace{i\dot
\tau \frac{1}{\tau(t)^{d/2}}\frac{x}{\tau} v\(t,\frac{x}{\tau(t)}\)
e^{i\frac{\dot\tau(t)}{\tau(t)}\frac{|x|^2}{2}}}_{\|\cdot \|_{L^2} =
\dot \tau \|yv\|_{L^2}\sim \dot \tau\|y\gamma\|_{L^2}\approx
\sqrt{\ln  t} },
\end{align*}
where we have used \eqref{eq:apv} to control the first term, and
\eqref{eq:moments} to show that the last factor is indeed of order
$\dot \tau$. 
 \end{proof}
 \begin{remark}
   For the case $0<s<1$, we refer to \cite{CaGa18}. Essentially, the
   proof relies on \cite[Lemma~5.1]{ACMA}, which states the following
   (we simplify the original statement, which  incorporates a
   semiclassical parameter): there exists a constant $K$ such that for
   all $s\in [0,1]$, all $u\in H^1(\R^d)$ and all $w\in
   W^{1,\infty}(\R^d)$,
   \begin{equation*}
\| |w|^s u\|_{L^2}\le \| |D_{x}|^s u\|_{L^2}+\| (\nabla -i
w)u\|_{L^2}^{s} 
\| u\|_{L^2}^{1-s}+
K \(1+\| \nabla w\|_{L^\infty}\)\| u\|_{L^2}.
\end{equation*}
We then apply this inequality with $w$ the gradient of the quadratic
oscillation in \eqref{eq:apv}, $w(t,x) =
\frac{\dot\tau(t)}{\tau(t)}x$. 
 \end{remark}

 \begin{remark}
   As pointed out by the editorial board, the function $\tau$, its
   asymptotic behavior, and the rescaling \eqref{eq:uvDMJ} were
   already present in \cite{CidDolbeault-p}, a reference we were not
   aware of. 
 \end{remark}

 \begin{remark}
   In the case of a defocusing power nonlinearity, \eqref{eq:NLS} with
   $\lambda>0$, the conservation of the energy implies that the
   $H^1$-norm of $u$ is uniformly bounded in time, unlike in
   Corollary~\ref{cor:growth}. Moreover, when $\si\ge 2/d$ is an
   integer, and $u\in \Sch(\R^d)$,  all the Sobolev norms $\|u(t)\|_{H^s}$ are bounded. This result is
   natural, since in that case, $u$ is asymptotically linear and the
   linear flow $e^{i\frac{t}{2}\Delta}$ preserves the Sobolev norms
   $H^s$ (see e.g. \cite{Ca11}). 
 \end{remark}
 
\begin{remark}\label{rem:perturb}
  These results remain valid when the logarithmic nonlinearity is
  perturbed by an energy-subcritical, defocusing powerlike
  nonlinearity,
  \begin{equation*}
  i\d_t u +\frac{1}{2} \Delta u =\lambda \ln\(|u|^2\)u  +\mu|u|^{2\si}u,\quad u_{\mid t=0} =u_0  ,
\end{equation*}
with $\mu>0$ and $0<\si<\frac{2}{(d-2)_+}$. Surprisingly enough, the
logarithmic nonlinearity is thus the stronger in the above equation.
\end{remark}

\subsection{Elements of proof}\label{sec:elements}

\subsubsection{A priori estimates}

The key step is to change the unknown function in order to get
coercivity. The change of unknown function is motivated by the
explicit computations in the Gaussian case: at leading order, all the
Gaussian solutions have the same dispersion, the same oscillations, and
the same asymptotic profile. Theorem~\ref{theo:logNLSdisp} states that
these three properties are shared by \emph{all} solutions.

Direct computations show that $v=v(t,y)$, given by \eqref{eq:uvDMJ}, solves
\begin{equation*}
 i{\partial}_t v +\frac{1}{2\tau(t)^2}\Delta_y  v = \lambda  v\ln\left\lvert
    \frac{v}{\gamma}\right\rvert^2-\lambda d v\ln \tau
+2\lambda
  v\ln\left(\frac{\|u_0\|_{L^2({\mathbb R}^d)}}{\|\gamma\|_{L^2({\mathbb R}^d)}} \right)
  \, , 
\end{equation*}
where we recall that $\gamma(y)=e^{-|y|^2/2}$, and the initial datum
for $v$ is
\begin{equation*}
v_{\mid t=0}=v_0:=\frac{\|\gamma\|_{L^2({\mathbb R}^d)}}{\|u_0\|_{L^2({\mathbb R}^d)}} u_0 .
\end{equation*}
 Replacing~$v$ with~$ve^{-i\theta(t)}$ for
\[
\theta(t) =
\lambda d\int_0^t\ln \tau(s)ds -2\lambda
 t \ln(\|u_0\|_{L^2}/\|\gamma\|_{L^2}),
\]
a change of unknown function which does not affect the conclusions of
Theorem~\ref{theo:logNLSdisp}, 
 we may assume that the
last two terms are absent, and we focus our attention on
\begin{equation}
  \label{eq:v}
  i{\partial}_t v +\frac{1}{2\tau(t)^2}\Delta_y  v = \lambda v\ln\left\lvert
    \frac{v}{\gamma}\right\rvert^2,\quad 
v_{\mid t=0}=v_0 \, .
\end{equation}
The above equation is still Hamiltonian: introduce
\begin{equation*}
 \mathcal E (t):= \IM\int_{{\mathbb R}^d} \bar v(t,y){\partial}_t v(t,y)dy= \mathcal
 E_{\rm kin} (t)+\lambda \mathcal E_{\rm ent}(t) \, ,
\end{equation*}
where
\begin{equation*}
  \mathcal E_{\rm kin}(t):= \frac{1}{2\tau(t)^2}\|\nabla_y v(t)\|_{L^2}^2  
\end{equation*}
is the (modified) kinetic energy and
\begin{equation*}
 \mathcal  E_{\rm ent} (t):= \int_{{\mathbb R}^d} |v(t,y)|^2 \ln\left\lvert
    \frac{v(t,y)}{\gamma(y)}\right\rvert^2dy = \int_{{\mathbb R}^d}
  |v(t,y)|^2 \ln\left\lvert 
    {v(t,y)}\right\rvert^2dy + \int_{{\mathbb R}^d} |y|^2 |v(t,y)|^2 dy 
\end{equation*}
is a relative entropy.  
 Direct computations yield
\begin{equation}\label{eq:E}
  \dot {\mathcal  E} = -2\frac{\dot \tau}{\tau}\mathcal E_{\rm kin} \, .
\end{equation}
\begin{remark}
  The Csisz\'ar-Kullback inequality reads (see
e.g. \cite[Th.~8.2.7]{LogSob}), for $f,g\ge 0$ with $\int f=\int g$,
\begin{equation*}
  \|f-g\|_{L^1({\mathbb R}^d)}^2\le 2 \|f\|_{L^1({\mathbb R}^d)}\int
  f(x)\ln \left(\frac{f(x)}{g(x)}\right) dx. 
\end{equation*}
Since $|v|^2$ and $\gamma^2$ have the same $L^1$-norm, ${\mathcal
  E}_{\rm ent}\ge 0$: we will not actually use this piece of
information, but this shows that if we could prove ${\mathcal
  E}_{\rm ent}(t)\to 0$ as $t\to \infty$ (which is established in the
case of Gaussian initial data), then the weak convergence in the last
point of Theorem~\ref{theo:logNLSdisp} would become a strong
convergence. 
\end{remark}
The following lemma resumes \eqref{eq:apv}, and contains an extra
integrability property:
\begin{lemma}\label{lem:APv}
  Under the assumptions of Theorem~\ref{theo:logNLSdisp}, 
\begin{equation*}
 \sup_{t\ge 0}\left(\int_{{\mathbb R}^d}\left(1+|y|^2+\left|\ln
    |v(t,y)|^2\right|\right)|v(t,y)|^2dy +\frac{1}{\tau(t)^2}\|\nabla_y
  v(t)\|^2_{L^2({\mathbb R}^d)}\right)<\infty
\end{equation*} 
and
\begin{equation}\label{eq:integralkin}
 \int_0^\infty \frac {\dot \tau(t)}{\tau^3(t)}\|\nabla_y
  v(t)\|^2_{L^2({\mathbb R}^d)} dt<\infty.
\end{equation}
\end{lemma}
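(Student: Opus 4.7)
The plan is to exploit the energy identity \eqref{eq:E}, combined with the key observation (recalled in the remark preceding the lemma) that the relative entropy $\mathcal E_{\rm ent}$ is non-negative by the Csisz\'ar--Kullback inequality, since mass conservation and the normalization of $v$ ensure that $|v(t)|^2$ and $\gamma^2$ share the same $L^1$ norm equal to $\|\gamma\|_{L^2}^2$. Because $\lambda>0$, the total energy $\mathcal E=\mathcal E_{\rm kin}+\lambda\mathcal E_{\rm ent}$ is then a sum of two non-negative quantities, and \eqref{eq:E} forces it to be non-increasing, hence $\mathcal E(t)\le \mathcal E(0)$ for all $t\ge 0$. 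To use this I first have to check that $\mathcal E(0)$ is finite, which follows from $u_0\in\Sigma$: Sobolev embedding controls $\int_{|v_0|\ge 1}|v_0|^2\ln|v_0|^2\,dy$ through $H^1$, while the moment assumption $|x|u_0\in L^2$ controls the negative contribution $-\int_{|v_0|<1}|v_0|^2\ln|v_0|^2\,dy$ by the same dual Gagliardo--Nirenberg mechanism used below.

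Once $\mathcal E(t)$ is uniformly bounded, non-negativity of each piece immediately gives $\tau(t)^{-2}\|\nabla_y v(t)\|_{L^2}^2\lesssim 1$ and $\mathcal E_{\rm ent}(t)\lesssim 1$, the latter providing both $\int|y|^2|v|^2\,dy\lesssim 1$ and an upper bound on $\int|v|^2\ln|v|^2\,dy$. To upgrade this to the full bound on $\int|v|^2|\ln|v|^2|\,dy$ appearing in \eqref{eq:apv}, I would write
\begin{equation*}
\int|v|^2\bigl|\ln|v|^2\bigr|\,dy=\int|v|^2\ln|v|^2\,dy-2\int_{|v|<1}|v|^2\ln|v|^2\,dy,
\end{equation*}
so that only the integral over $\{|v|<1\}$ remains to be bounded from above. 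Using $|v|^2\bigl|\ln|v|^2\bigr|\le C_\eps |v|^{2-\eps}$ on this set, for $0<\eps\ll 1$, and applying the dual Gagliardo--Nirenberg inequality \eqref{eq:GNdual} with $\alpha=1$ and $\eta=\eps<\frac{4}{d+2}$, the right-hand side involves only $\|v\|_{L^2}$ (conserved equal to $\|\gamma\|_{L^2}$) and $\||y|v\|_{L^2}$ (already controlled), which concludes the proof of \eqref{eq:apv}.

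The integrability estimate \eqref{eq:integralkin} is then a direct consequence of the very same energy identity: integrating \eqref{eq:E} from $0$ to $T$ and using $\mathcal E(T)\ge 0$ gives
\begin{equation*}
\int_0^T \frac{\dot\tau(t)}{\tau^3(t)}\|\nabla_y v(t)\|_{L^2}^2\,dt\le \mathcal E(0),
\end{equation*}
and letting $T\to\infty$ yields the claim. The main obstacle I anticipate is the lower bound on $\int_{|v|<1}|v|^2\ln|v|^2\,dy$: the logarithmic singularity at zero has to be absorbed by the second moment provided by the relative entropy itself, so the Gagliardo--Nirenberg step is essential, and the interplay between the weight $|y|^2$ and the negative part of the logarithm is really where the sign $\lambda>0$ (which is harmless for the sign of $\mathcal E$) is turned into an a priori control of $|v|$ near zero.
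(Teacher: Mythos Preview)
Your argument contains a circularity. From $\mathcal E_{\rm ent}(t)\le C$ you claim that \emph{both} $\int|y|^2|v|^2\,dy\lesssim 1$ and an upper bound on $\int|v|^2\ln|v|^2\,dy$ follow. The second is immediate (the moment term is nonnegative), but the first is not: since $\int|v|^2\ln|v|^2\,dy$ can be arbitrarily negative, the bound on the \emph{sum} $\mathcal E_{\rm ent}$ says nothing about the moment alone. You then invoke the moment bound as ``already controlled'' in the dual Gagliardo--Nirenberg step, which closes the circle.

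The repair is precisely the bootstrap the paper performs. From $\mathcal E_{\rm ent}(t)\le C$ write
\[
\||y|v(t)\|_{L^2}^2 \le C - \int_{\R^d}|v|^2\ln|v|^2\,dy \le C + C_\eps\int_{\R^d}|v|^{2-\eps}\,dy \le C + C'\,\||y|v(t)\|_{L^2}^{d\eps/2},
\]
using \eqref{eq:GNdual} with $\alpha=1$; for $\eps>0$ small this self-closes and gives the moment bound, after which your ``upgrade'' to $\int|v|^2\bigl|\ln|v|^2\bigr|\,dy$ goes through. Once reorganized this way, your proof coincides with the paper's decomposition $\mathcal E=\mathcal E_++\mathcal E_-$ and the same absorption argument. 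Your appeal to Csisz\'ar--Kullback to get $\mathcal E\ge 0$ is a pleasant shortcut the paper explicitly chooses not to use; it does streamline the derivation of \eqref{eq:integralkin} (you need only $\mathcal E(T)\ge 0$ rather than a two-sided bound on $\mathcal E$), but it does not let you bypass the bootstrap for the individual pieces of $\mathcal E_{\rm ent}$.
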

\begin{proof}
  Write the pseudo-energy $\mathcal E$  as $\mathcal E=\mathcal
  E_++\mathcal E_-$, where $\mathcal E_+$ gathers the positive terms of
  $\mathcal E$, 
  \begin{equation*}
    \mathcal E_+(t) = \frac{1}{2\tau(t)^2}\|\nabla_y v(t)\|_{L^2}^2  +
    \lambda \int_{|v|>1} |v|^2 \ln|v|^2 +\lambda \int  _{{\mathbb
        R}^d} |y|^2|v|^2 ,
  \end{equation*}
  and
  \begin{equation*}
     \mathcal E_-(t) = 
    \lambda \int_{|v|<1} |v|^2 \ln|v|^2\le 0.
  \end{equation*}
  Since $\mathcal E$ is nonincreasing,
  \begin{equation*}
    \mathcal E_+(t) \le \mathcal E(0)- \mathcal E_-(t) \le \mathcal
    E(0)+ C_\eps \int_{|v|<1} |v|^{2-\eps}\le \mathcal
    E(0)+ C_\eps \int_{\R^d} |v|^{2-\eps},
  \end{equation*}
  for any $0<\eps<2$. Recalling \eqref{eq:GNdual} (with $\alpha=1$), and noting that $\|v(t)\|_{L^2}=\|v(0)\|_{L^2} (=\|\gamma\|_{L^2})$, we
obtain a control of the form
\begin{equation*}
   \mathcal E_+(t) \le \mathcal E(0)+ C \mathcal E_+(t)^{d \epsilon/2} ,
 \end{equation*}
 hence $\mathcal E_+\in L^\infty(\R_+)$ by picking $\eps>0$
 sufficiently small. Then $\mathcal E_-\in L^\infty(\R_+)$, hence
 $\mathcal E\in L^\infty(\R_+)$, and \eqref{eq:integralkin} by just
 saying that $\dot {\mathcal E}$ is integrable. 
\end{proof}

\subsubsection{Center of mass}
Adapting the computation of \cite{Ehrenfest}, introduce
\begin{equation*}
  I_1(t) := \IM \int_{\R^d} \overline v (t,y)\nabla_y v(t,y)dy \, ,\quad I_2(t) :=
  \int_{\R^d} y|v(t,y)|^2dy \, . 
\end{equation*}
We compute:
\begin{equation*}
  \dot I_1= -2\lambda I_2 \,,\qquad \dot I_2 = \frac{1}{\tau(t)^2}I_1 \, .
\end{equation*}
Set $\tilde I_2=\tau I_2$:  $\ddot {\tilde I}_2=0$, hence
\begin{equation*}
  I_2(t) = \frac{1}{\tau(t)}\(\dot
 {\tilde I}_2(0)t+\tilde I_2(0)\)=\frac{1}{\tau(t)}\(-I_1(0)t+I_2(0)\)=\O\(
 \frac{1}{\sqrt{\ln t}} \)\, .
\end{equation*}
In particular,
$\dis  \int_{\R^d} y|v(t,y)|^2dy\Tend t \infty 0 =
\int_{\R^d}y\gamma(y)^2dy.$
If $I_1(0)\not =0$, we also have
\begin{equation*}
  I_1(t)\Eq t \infty c\frac{t}{\sqrt{\ln t}},
  \end{equation*}
  while if $I_1(0)=0\not =I_2(0)$,
  \begin{equation*}
  I_1(t)\Eq t \infty \tilde c\sqrt{\ln t}.
\end{equation*}
\begin{remark}
  In view of Cauchy-Schwarz inequality,
\begin{equation*}
  |I_1(t)|\le \|v\|_{L^2}\|\nabla_y v\|_{L^2} =
  \|\gamma\|_{L^2}\|\nabla_y v\|_{L^2} . 
\end{equation*}
So unless the initial data are centered in zero in phase space
($I_1(0)=I_2(0)=0$), 
\begin{equation*}
  \|\nabla_y v(t)\|_{L^2} \Tend t \infty \infty,
\end{equation*}
suggesting that $v$ is rapidly oscillatory: in general,
\eqref{eq:uvDMJ} filters out the \emph{leading order} oscillations
only, in the limit $t\to\infty$. A careful examination of the
computations in the Gaussian case leads to the same conclusion. This
explains why, in Theorem~\ref{theo:logNLSdisp}, the main results
concern the \emph{modulus} of $v$, and no other quantity (that would
involve the argument of $v$). 
\end{remark}

\subsubsection{Second order momentum}

Introduce
$\displaystyle A = \IM
    \int v\, y \cdot\nabla_y \bar v.$
The estimate \eqref{eq:apv} and Cauchy-Schwarz inequality yield:
$|A|\le \|yv\|_{L^2}\|\nabla 
  v\|_{L^2}\lesssim \tau(t)$.\\

Use the conservation of the energy of $u$, and rewrite the energy in
terms of $v$, via \eqref{eq:uvDMJ}:
\begin{align*}
  \frac{d}{dt}\Bigg(&
\underbrace{E_{\rm kin}}_{=\O(1)}+\frac{(\dot
                      \tau)^2}{2}\int |y|^2|v|^2-\underbrace{\frac{\dot 
\tau}{\tau}A}_{=\O(\dot \tau)}+\underbrace{\lambda \int |v|^2\ln|v|^2}_{=\O(1)}-\lambda d\ln \tau \int
                      |v|^2\\ 
&+
\underbrace{2\lambda
                                 \|\gamma\|_{L^2}^2\ln\(\frac{\|u_0\|_{L^2}}{\|\gamma\|_{L^2}}\)}_{=\O(1)}
\Bigg)=0 ,
\end{align*}
where we have used \eqref{eq:apv} for the a priori estimates. We infer
\begin{equation*}
 \frac{(\dot \tau)^2}{2} \int |y|^2|v|^2
   -\lambda d\ln \tau \int |v|^2=\O(\dot \tau).
\end{equation*}
But multiplying \eqref{eq:tau-libre} by $\dot \tau$ and integrating, we
find $(\dot \tau)^2 = 4\lambda \ln \tau$, and
$\|v\|_{L^2}^2=\|\gamma\|_{L^2}^2 =
\frac{2}{d}\|y\gamma\|_{L^2}^2$, so we conclude
\begin{equation*}
 \|yv(t)\|_{L^2}^2-\|y\gamma\|_{L^2}^2=\O\(\frac{1}{\sqrt{\ln
       t}}\) .
 \end{equation*}
 \subsubsection{Universal profile}\label{sec:profil}
 The proof of the weak convergence of $|v(t,\cdot)|^2$ to $\gamma^2$
 relies on a hydrodynamical formulation of \eqref{eq:v}, based on
 the 
Madelung transform, which relates (nonlinear) Schr\"odinger equations
to some equations from compressible fluid mechanics (see for instance the
survey \cite{CaDaSa12}). Formally, this
amounts to a polar factorization of 
$v$,
\[v=\sqrt\rho e^{i\phi}.\]
The fluid velocity is then given by $\nabla \phi$. However, such a
decomposition is obviously delicate when $v$ (or, equivalently,
$\rho$) becomes zero. The rigorous approach consists in introducing
\[\rho=|v|^2,\quad J = \IM \bar v\nabla v.\]
 From a fluid mechanical perspective, we consider the momentum $J$
 instead of the velocity: this is standard in compressible fluid
 mechanics. Plugging this decomposition into \eqref{eq:v} and
 separating real and imaginary parts, we find:
 \begin{equation*}
 \left\{
\begin{aligned}
&\d_t \rho + \frac{1}{\tau^2}\nabla\cdot J=0,\\
& \d_t J +\lambda \nabla \rho +2\lambda 
y \rho =\frac{1}{4\tau^2}\Delta \nabla\rho
-\frac{1}{\tau^2}\nabla\cdot \RE \(\nabla
  v\otimes\nabla \bar v\).
\end{aligned}
\right.
\end{equation*}
To guess the result, consider the
baby model:
  \begin{equation*}
  \left\{
\begin{aligned}
&\d_t \rho + \frac{1}{\tau^2}\nabla\cdot J=0, \\
& \d_t J +\lambda \nabla \rho +2\lambda
y \rho =0 .
\end{aligned}
\right.
\end{equation*}
We can write an equation involving $\rho$ only, by just writing $\d_t
\nabla\cdot J=\nabla\cdot \d_t J$: 
\[\d_t \(\tau^2\d_t \rho\) =\lambda
  \nabla\cdot \(\nabla+2y\)\rho=: \lambda L\rho ,\]
where $L$ is the Fokker-Planck operator associated to the harmonic potential.
Note that $\tau^2\ll (\dot \tau \tau)^2$: define~$s$ such that
$\dis
\frac{\dot \tau \tau}{\lambda}  \d_t=\d_s \,, 
$
that is
\begin{equation*}
  s = \int\frac{\lambda}{\dot \tau \tau}= \int \frac{\ddot \tau}{2
    \dot \tau} = \frac{1}{2}\ln \dot \tau(t) \, .
\end{equation*}
Notice that
\begin{equation*}
  s\sim \frac{1}{4} \ln \ln t \, ,\quad t\to \infty \, .
\end{equation*}
Then again discarding formally lower order terms we find 
\begin{equation*}
\d_s \rho = L\rho .
\end{equation*}
\begin{remark}
  Recall that $\rho(t,y) = |v(t,y)|^2$: the previous computations have shown
\begin{equation*}
   \int_{\R^d}
  \begin{pmatrix}
    1\\
y\\
|y|^2
  \end{pmatrix}
\rho(t,y)dy=
 \int_{\R^d}
  \begin{pmatrix}
    1\\
y\\
|y|^2
  \end{pmatrix}
\gamma^2(y)dy +\O\(\frac{1}{\sqrt{\ln t}}\).
\end{equation*}
\end{remark}
We have just derived formally:
\begin{equation*}
 \d_s \rho = L\rho ,\quad L=\nabla\cdot
   \(\nabla+2y\). 
\end{equation*}
For such Fokker--Planck equation, convergence to equilibrium is known
thanks to \eqref{eq:apv}  (\cite{AMTU}),
\begin{equation*}
  \|\rho(s)-\gamma^2\|_{L^1}\lesssim e^{-Cs}\|\rho_0-\gamma^2\|_{L^1}. 
\end{equation*}
The constant $C$ stems from a spectral gap, which is, in the present
case of a Fokker-Planck operator associated to the harmonic potential, $C=2$. 
Both aspects coincide, since
\begin{equation*}
  s\sim \frac{1}{4} \ln \ln t \, ,\quad t\to \infty,\quad
  \text{hence}\quad e^{-2s}\sim \frac{1}{\sqrt{\ln t}}.
\end{equation*}
This is a hint that the new time variable $s$ is well adapted. 
 Back to the complete hydrodynamical system,  introduce the time
 variable $s$, $\tilde 
\rho(s,y):= \rho(t,y)$:
 \begin{equation*}
  \d_s \tilde \rho -\frac{2\lambda}{(\dot \tau)^2}\d_s \tilde \rho
  +\frac{\lambda }{(\dot \tau)^2} \d_s^2 \tilde \rho = L\tilde \rho
  -\frac{1}{4\lambda \tau^2}\Delta^2 \tilde \rho 
  -\frac{1}{\tau^2}\nabla\cdot \nabla\cdot \RE \(\nabla
  v\otimes\nabla \bar v\). 
  \end{equation*}
For $s\in [-1,2]$ and an arbitrary sequence $s_n\to \infty$, set 
$\tilde\rho_n(s,y) = \tilde \rho(s+s_n,y).$ 
By De la Vall\'ee-Poussin and 
Dunford--Pettis theorems, we have some weak compactness in $L^1$,
hence, up to  a subsequence,
\[  \tilde \rho_n\rightharpoonup \tilde \rho_\infty
  \text{ in }L^p_s(-1,2;L^1_y),\quad \forall p\in [1,\infty).\]
Passing to the limit in the equation for $\tilde \rho$ (see
\cite{CaGa18} for details),
\begin{equation*}
 \d_s \tilde \rho_\infty  =L\tilde \rho_\infty\text{ in
        }\mathcal D'\((-1,2)\times\R^d\).
\end{equation*}
Since $J=\IM \bar v\nabla_yv$,  \eqref{eq:integralkin} yields
\begin{equation*}
  \frac{\dot \tau}{\tau}\tilde J\in L^2_sL^1_y,\quad \text{hence }
   \frac{\dot \tau}{\tau}\nabla \cdot\tilde J_n \Tend n\infty 0\quad\text{in } L^2(-1,2;W^{-1,1}).
 \end{equation*}
Therefore, $ \d_s \tilde \rho_{\infty}=0$.

On the other hand, as evoked above, it is known from \cite{AMTU} that any solution to 
\[
\d_s \tilde \rho_\infty  =L\tilde \rho_\infty
\]
satisfying the  a priori estimates of Lemma~\ref{lem:APv} converges for
 large time
\begin{equation*}
\lim_{s\to \infty} \| \tilde \rho_\infty (s) - \gamma^2\|_{L^1({\mathbb R}^d)} = 0.
 \end{equation*}
Since we have seen that $\d_s \tilde
\rho_\infty=0$, we infer 
$\tilde \rho_\infty= \gamma^2$.
Thus, the limit is unique, and no extraction is needed:
\begin{equation*}
 \tilde\rho(s)
    \mathop{\rightharpoonup}\limits_{s\to \infty} \gamma^2 \quad 
  \text{weakly in }L^1(\R^d).
\end{equation*}
\begin{remark}
  Some information is lost when approximating the original
  hydrodynamical system by a Fokker-Planck equation: this is the
  reason why only a weak convergence is obtained. This should not be
  too surprising, as the Fokker-Planck equation is parabolic, while we
  started from a Hamiltonian equation. On the other hand, in
  \cite{FeAPDE}, by changing the strategy of proof,
  the convergence is improved: Denoting by  $  W_1$ the Wasserstein distance,
there exists $C$ such that
\begin{equation*}
 W_1\(\frac{|v(t)|^2}{\pi^{d/2}},\frac{\gamma^2}{\pi^{d/2}}\)\le
  \frac{C}{\sqrt{\ln t}},\quad t\ge e.
\end{equation*}
We recall that for $\nu_1$
and $\nu_2$ probability measures, the Wasserstein distance is defined by
\begin{equation*}
  W_p(\nu_1,\nu_2)=\inf \left\{ \(\int_{{\mathbb R}^d\times
    {\mathbb R}^d}|x-y|^pd\mu(x,y)\)^{1/p};\quad (\pi_j)_\sharp \mu=\nu_j\right\},
\end{equation*}
where $\mu$ varies among all probability measures on ${\mathbb R}^d\times
{\mathbb R}^d$, and $\pi_j:{\mathbb R}^d\times {\mathbb R}^d\to {\mathbb R}^d$ denotes the canonical
projection onto the $j$-th factor. See e.g. \cite{Vi03}.
In the case $p=1$, the Wasserstein distance, corresponding to the
Kantorovich-Rubinstein metric, is also characterized by 
\begin{equation*}
  W_1(\nu_1,\nu_2)=\sup \left\{ \int_{{\mathbb R}^d}\Phi
    d(\mu_1-\mu_2),\ \Phi\in C(\R^d;\R),\ \operatorname{Lip}(\Phi)\le 1\right\},
\end{equation*}
and it is this point of view which is adopted in \cite{FeAPDE}. The
question of the strong convergence, in L$^1$,  of $|v|^2$ toward $\gamma^2$ 
remains open for non-Gaussian initial data. 
\end{remark}

\section{From NLS to compressible fluids}\label{sec:transition}

We have seen that the end of the proof of
Theorem~\ref{theo:logNLSdisp} relies on a hydrodynamical point of
view. This suggests that we might consider models from fluid mechanics
from the very start (instead of \eqref{eq:logNLS}), and see how what
has been understood on the Schr\"odinger side can be exported to the
fluid mechanical side. Essentially, Theorem~\ref{theo:logNLSdisp} has
an exact counterpart in fluid mechanics, up to two important remarks:
\begin{itemize}
\item The direct analogue of \eqref{eq:logNLS} in fluid mechanics is
  the Korteweg equation (via Madelung transform): we may have or not
  have the capillary term (Korteweg or Euler), and we may add a
  quantum Navier-Stokes term.
\item The existence theory is much easier in the Schr\"odinger case,
  \eqref{eq:logNLS}, than for fluids. 
\end{itemize}

Consider the solution $u$ to \eqref{eq:NLS}, and resume the Madelung
transform, now directly for $u$:
\[\rho=|u|^2,\quad j = \IM \bar u\nabla u.\]The unknown $(\rho,j)$ solves
 the Korteweg system:
 \begin{equation*}
   \left\{
     \begin{aligned}
       &\d_t \rho +\nabla\cdot j=0,\\
       &\d_t j + \nabla\(\frac{j\otimes j}{\rho}\) +\nabla
       \(\rho^\gamma\)= \frac{1}{2}\rho\nabla \(\frac{\Delta
         \sqrt\rho}{\sqrt\rho}\),
     \end{aligned}
     \right.
 \end{equation*}
provided that we require
 \begin{equation*}
    \lambda = \frac{\gamma}{\gamma-1},\quad \si =\frac{\gamma-1}{2} .
  \end{equation*}
The capillarity term (right-hand side of the second equation), involving the term
$ \frac{1}{2} \(\frac{\Delta
         \sqrt\rho}{\sqrt\rho}\)$, also known as
quantum pressure or Bohm potential in quantum
mechanics, can be written in several fashions, e.g.:
\begin{align*}
  \rho\nabla \(\frac{\Delta
         \sqrt\rho}{\sqrt\rho}\)&=\frac{1}{2}\nabla\cdot\(\rho
       \nabla^2\ln \rho\)=\nabla\cdot\( \sqrt\rho
                                  \nabla^2\sqrt\rho -\nabla\sqrt\rho\otimes\nabla\sqrt\rho\) \\
  &=
       \frac{1}{2}\nabla\Delta\rho -2\nabla\cdot\(
       \nabla\sqrt\rho\otimes\nabla\sqrt\rho\) . 
     \end{align*}
 See for instance \cite{AnMa09,CaDaSa12}. Either of these formulas may
 be used, typically when constructing solutions to the Korteweg
 equation, according to the level of regularity considered, and the
 presence or absence of vacuum.
 \smallbreak

 When $\gamma>1$, the pressure law $P(\rho)=\rho^\gamma$ corresponds
 to polytropic fluids, while for $\gamma=1$, the fluid is
 isothermal. We note that to get a correspondence with fluid
 mechanics, the nonlinearity in Schr\"odinger equations comes with some
 coupling constant $\lambda>0$ (defocusing case).

  \section{The limit $\gamma\to 1$}\label{sec:limit}

  From the above identification between $\si$ and $\gamma$, passing to
  the limit $\gamma\to 1$ is clear, at least formally, in the
  equations from fluid mechanics. On the other hand, it is not obvious
  to determine the ``natural'' limit for Equation~\eqref{eq:NLS} when
  $\si\to 0$. Madelung transform, as we have seen before, suggests
  that the ``good'' limit is
  \[|u|^{2\si}u\to \ln(|u|)u\quad\text{as}\quad 
  \si\to 0.\]

  We mention \cite{WangZhang2019}, where it is shown that the ground state of
  \begin{equation*}
    -\frac{1}{2}\Delta \phi+\omega\phi = |\phi|^{2\si}\phi
  \end{equation*}
  converges, as $\si\to 0$, to the ground state of
  \begin{equation*}
    -\frac{1}{2}\Delta \phi+\omega\phi = \phi\ln|\phi|,
  \end{equation*}
  that is, the Gausson (up to invariants). This case, corresponding to
  the assumption $\lambda<0$, gives more credit to the above
  discussion. 

  Apart from this very specific case, it is difficult to give a
  rigorous meaning to the limit $\gamma\to 1$, or even construct
  solutions the case $\gamma=1$. In the case of \eqref{eq:NLS}, we
  have seen that the (nonlinear) potential energy is
  \begin{equation*}
    \frac{\lambda}{\si+1}\int_{\R^d}|u(t,x)|^{2\si+2}dx,
  \end{equation*}
  and becomes, in the case of \eqref{eq:logNLS},
  \begin{equation*}
    \lambda \int_{\R^d}|u(t,x)|^2\(\ln|u(t,x)|^2 -1\)dx.
  \end{equation*}
It has no longer a definite sign. In the fluid case, using the
conservation of mass, the standard entropy in the isothermal case
reads
\begin{equation*}
  \int_{\R^d}\rho(t,x)\ln \rho(t,x)dx,
\end{equation*}
and we naturally face the same issue. There is however a major
difference regarding the Cauchy problem: \eqref{eq:NLS} is semilinear
(for $\si<\frac{2}{(d-2)_+}$, it is solved in $H^1(\R^d)$ by using a
  fixed point argument, and the nonlinearity is viewed as a
  perturbation, see e.g. \cite{CazCourant}), while the above Korteweg
  equation is quasilinear (nonlinear terms cannot be viewed as
  perturbations, unless one works with analytic regularity). The Cauchy problem is in general still a
  major issue for the equations of compressible fluid mechanics which
  we now discuss, in the sense that the optimal assumptions to
  construct weak solutions are not always known; see e.g. \cite{RoussetBBK} and
  references therein. For this reason, we distinguish rigidity results
  (``if theorem'') and the construction of weak solutions.
  \bigbreak

  On the other hand, the presence of a pressure term of isothermal
  form in the large time limit can be guessed as follows. Consider
  more generally a barotropic  (convex) pressure law $P(\rho)$,
  not necessarily equal to $\rho^\gamma$. Since the gradient of the
  pressure is involved, the value of $P(0)$ is irrelevant from a
  mathematical point of view, and we assume $P(0)=0$. If the density
  $\rho$ is dispersive in the large time limit, then the Taylor
  expansion of $P$ at zero determines the large time behavior:
  \begin{equation*}
    P(\rho)\Eq \rho 0 P'(0)\rho +\frac{1}{2}P''(0)\rho^2 +\dots
  \end{equation*}
  If $P'(0)>0$, then  isothermal effects are present at leading
  order, while if $P'(0)=0$, the dynamics corresponds to polytropic
  fluids. This is another way, probably more natural, to interpret
  Remark~\ref{rem:perturb}; see Remark~\ref{rem:pressure}. 
 
\section{Isothermal fluids: setting}
From now on, we no longer write any Schr\"odinger equation, and $u$
denotes the fluid velocity, whose rigorous definition requires some
care (as we have slightly evoked before), and which corresponds to the momentum divided by the density,
\begin{equation*}
  u =\frac{j}{\rho},
\end{equation*}
outside of vacuum, that is for $\rho>0$ ($\rho\ge 0$ in general). We
consider 
\begin{equation}\label{eq:fluide}
  \left\{
\begin{aligned}
& \d_t \rho + \nabla\cdot (\rho u) = 0, \\
& \d_t(\rho u) + \nabla\cdot (\rho u \otimes u) + \nabla \rho
= \frac{\eps^2}{2} \rho \nabla \left( \frac{\Delta \sqrt{\rho}}{\sqrt{\rho}}\right) 
+ \nu \nabla\cdot  (\rho {\mathbb D} u) ,   
\end{aligned}
\right.
\end{equation}
with a capillarity $\eps\ge 0$, a viscosity $\nu\ge 0$, and where
${\mathbb D} u=\frac{1}{2}(\nabla u+\nabla u^{\top})$ denotes the
symmetric part of $\nabla u$. The first term of the right-hand side corresponds to
capillarity (Korteweg term), and the second is a quantum Navier-Stokes
correction, see \cite{BruMe10}: contrary to the Newtonian case involving
$\nu\Delta u$ (see e.g. \cite{Fei04,Lio98}), the viscosity can be thought of as
linear in $\rho$; see \cite{BrJa18,BrVaYu-p} for more general models
and their analysis.

\smallbreak

We shall not detail here the notion of solution adopted in
\cite{CCH18,CCH-AIF}, and present the main results or ideas in a rather
superficial way.
\smallbreak

Formally, the mass is conserved in \eqref{eq:fluide},
\begin{equation*}
  \frac{d}{dt}\int_{\R^d}\rho(t,x)dx=0,
\end{equation*}
and the energy
 \begin{equation}\label{eq:energy-init}
    E(t) = \frac{1}{2}\int_{\R^d}  \rho|u|^2 d x+\frac{\eps^2}{2} \int |\nabla
    \sqrt{\rho}|^2 dx+\int_{\R^d} \rho\ln\rho \,dx,
  \end{equation}
satisfies
\begin{equation}\label{eq:dissip-fluid}
  \dot E(t) =- \nu \int_{\R^d} \rho |\mathbb D u|^2dx.
\end{equation}
We do not write the dependence of the integrated functions upon
$(t,x)$ to shorten notations.

\begin{remark}[Explicit solutions]
  If $\rho_0$, the initial datum for
  $\rho$, is Gaussian, and if $u_0$ (initial velocity) is affine
  (think of $u_0$ as the gradient of the argument of a complex
  Gaussian), then we have explicit solutions: $\rho(t,\cdot)$ is
  Gaussian for all $t\ge 0$, $u(t,\cdot)$ is affine, and their
  time-dependent coefficients are given by explicit ordinary
  differential equations. Surprisingly enough, at leading order, the
  large time behavior 
  of the solutions to these  ordinary
  differential equations is independent of $\eps,\nu\ge 0$, and the
  analysis presented in Section~\ref{sec:gaussian} is generalized in
  \cite{CCH18}. 
\end{remark}
  \section{Rigidity in isothermal fluids}

  The end of the proof of Theorem~\ref{theo:logNLSdisp} relies on a
  hydrodynamical approach, suggesting that some results remain valid
  if we start from the isothermal Korteweg equation. The argument
  presented in Section~\ref{sec:profil} suggests that the capillary
  term has no influence in the large time behavior at leading order:
  assuming $\eps>0$ or $\eps=0$ in \eqref{eq:fluide} is not expected
  to change the large time description. More surprisingly, the
  presence of the quantum Navier-Stokes correction has no influence
  either: we may suppose $\nu=0$ or $\nu>0$.
  \smallbreak

  In view of \eqref{eq:uvDMJ} and Madelung transform, we change the
  unknown functions $(\rho,u)$ to $(R,U)$ through the relations
  \begin{equation}
  \label{eq:uvFluid}
  \rho(t,x) =
  \frac{1}{\tau(t)^d}R\(t,\frac{x}{\tau(t)}\)
\frac{\|\rho_0\|_{L^1}}{\|\Gamma\|_{L^1}},\quad 
  u(t,x) = \frac{1}{\tau(t)} U \(t,\frac{x}{\tau(t)}\) +\frac{\dot \tau(t)}{\tau(t)}x,
\end{equation}
where we denote by $y$ the
spatial variable for $R$ and $U$. The function $\tau$ is the same as
in Theorem~\ref{theo:logNLSdisp}, given by \eqref{eq:tau-libre}. The
function $\Gamma$ is defined by  $\Gamma(y)=e^{-|y|^2}$; in other
words, $\Gamma=\gamma^2$ as defined in Theorem~\ref{theo:logNLSdisp}. 
The system \eqref{eq:fluide} becomes, in terms of these new unknowns,
\begin{equation}\label{eq:RU}
  \left\{
    \begin{aligned}
  & \d_tR+\frac{1}{\tau^2}\nabla\cdot \(R U\)=0,\\
    &\d_t (R U) +\frac{1}{\tau^2}\nabla\cdot (R U \otimes U)
      +2\kappa y R 
      + \nabla R  \\
&\phantom{\d_t (R w) +\frac{1}{\tau^2}\nabla   }
=\frac{\eps^2}{2\tau^2}R\nabla\( \frac{\Delta  \sqrt{R}}{\sqrt{R}}\)   
+\frac{\nu}{\tau^2} \nabla\cdot (R \mathbb D U) +\nu \frac{\dot
  \tau}{\tau} \nabla R. 
    \end{aligned}
\right.
\end{equation}
  We define the pseudo-energy $\mathcal E$ of the system \eqref{eq:RU} by
\begin{equation}\label{eq:pseudo-energy-fluide}
    \mathcal E(t) := \frac{1}{2\tau^2}\int
  R|U|^2+\frac{\eps^2}{2\tau^2} \int|\nabla \sqrt R|^2 +\int (R |y|^2
   + R \ln R) ,
  \end{equation}
which formally satisfies 
\begin{equation}\label{eq:evol-pseudo}
   \dot{\mathcal E}(t) =- \mathcal D(t) - \nu \frac{\dot
     \tau(t)}{\tau(t)^3} \int R(t,y)  \nabla\cdot U(t,y)dy ,
\end{equation}
where the dissipation $\mathcal D(t) $ is defined by
\begin{equation}\label{eq:dissip-pseudo}
\begin{aligned}
\mathcal D(t) 
 := \frac{\dot \tau}{\tau^3}\int R |U|^2 +\eps^2 \frac{\dot \tau}{\tau^3} \int |\nabla \sqrt R|^2 
+  \frac{\nu}{\tau^4} \int R|\mathbb D U|^2.
\end{aligned} 
\end{equation}
Mimicking the proof of Lemma~\ref{lem:APv}, it is natural to expect
that each term in $\mathcal E$ is bounded (recall that $\mathcal E$ is
not signed, because of the logarithm), and that $\dot{\mathcal E}$ is
integrable. This is formally a natural assumption, but as the Cauchy
problem is a delicate issue, the following result remains an ``if
theorem'' in most cases.
\begin{theorem}[\cite{CCH18}]\label{theo:temps-long}
  Let $\eps,\nu\ge 0$, and let $(R,U)$ be a global weak solution of
  \eqref{eq:RU}.
   \begin{enumerate}
  
  \item  If $\int_0^\infty \mathcal D(t) \, dt < \infty,$
    then 
\[
\int_{\R^d} yR(t,y)d y\Tend t \infty 0\quad \text{and}\quad \left|\int_{\R^d}
  (RU)(t,y)dy\right|\Tend t \infty \infty, 
\]
 unless $\int y R(0,y)dy= \int (RU)(0,y)dy =0$, a case where 
\[
\int_{\R^d} yR(t,y)dy=\int_{\R^d}
  (RU)(t,y)d y\equiv 0.
\]

\smallskip
  \item If $\displaystyle
\sup_{t \ge 0} \mathcal E(t) + \int_0^\infty \mathcal D(t)dt < \infty,$
then
$R(t, \cdot) \rightharpoonup \Gamma$ weakly in $L^1(\R^d)$ as $ t \to \infty$.

\smallskip
\item If $\displaystyle
\sup_{t \ge 0} \mathcal E(t) < \infty$ and  the energy $E$
  defined by \eqref{eq:energy-init} 
satisfies $E(t)=o\(\ln t\)$ as $t\to \infty$, then
\begin{equation*}
  \int_{\R^d}|y|^2R(t,y) dy \Tend t \infty \int_{\R^d}|y|^2 \Gamma(y) dy.
\end{equation*}
  \end{enumerate}
\end{theorem}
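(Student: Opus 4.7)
\emph{Overview and Part (1).} My plan is to transport the three arguments of Section~\ref{sec:elements}---center of mass, universal profile via a Fokker--Planck limit, and second moment via the energy identity---from the rescaled Schr\"odinger wave function $v$ to the rescaled fluid unknowns $(R,U)$. For Part (1), set $I_1(t):=\int_{\R^d}(RU)(t,y)\,dy$ and $I_2(t):=\int_{\R^d}y\,R(t,y)\,dy$. Testing the continuity equation in \eqref{eq:RU} against $y$ yields $\dot I_2=I_1/\tau^2$; integrating the momentum equation in $y$ eliminates the convective flux $\nabla\cdot(RU\otimes U)$, the pressure $\nabla R$, and both viscous contributions (all full divergences), and also the capillary term, since $R\nabla(\Delta\sqrt R/\sqrt R)$ is a divergence by the identities recalled in Section~\ref{sec:transition}. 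Only the forcing $2\kappa yR$ survives, producing $\dot I_1=-2\kappa I_2$. Setting $\tilde I_2:=\tau I_2$ and using $\tau\ddot\tau=2\kappa$ (the defining relation \eqref{eq:tau-libre}, with $\kappa=\lambda$), a direct computation gives $\ddot{\tilde I}_2=0$, hence
\[
\tilde I_2(t)=I_2(0)+tI_1(0),\qquad I_1(t)=I_1(0)\bigl(\tau(t)-t\dot\tau(t)\bigr)-\dot\tau(t)I_2(0).
\]
By Theorem~\ref{theo:logNLSdisp}, $\tau(t)\sim 2t\sqrt{\ln t}$ and $\dot\tau(t)\sim 2\sqrt{\ln t}$, so $I_2(t)=O(1/\sqrt{\ln t})\to 0$ in every case. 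Moreover $\tfrac{d}{dt}(\tau-t\dot\tau)=-t\ddot\tau=-2\kappa t/\tau\sim-\kappa/\sqrt{\ln t}$, hence $\tau-t\dot\tau\sim-\kappa t/\sqrt{\ln t}$ and $|I_1(t)|\to\infty$ unless $I_1(0)=I_2(0)=0$, in which case $\tilde I_2\equiv 0$ and the formulas give $I_1\equiv I_2\equiv 0$.

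\emph{Part (2).} I adapt the Fokker--Planck argument of Section~\ref{sec:profil}. The bound $\sup\mathcal E<\infty$ combined with mass conservation $\int R(t,y)\,dy=\|\Gamma\|_{L^1}$ controls $\int R|y|^2\,dy$ and $\int R|\ln R|\,dy$ (the negative part of $R\ln R$ being estimated as in Lemma~\ref{lem:APv}, splitting $\{R<1\}\cup\{R\ge 1\}$ and using \eqref{eq:GNdual}), yielding weak $L^1$-compactness of $\{R(t,\cdot)\}_{t\ge 0}$ by de la Vall\'ee-Poussin and Dunford--Pettis. Combining continuity and momentum as in the baby model of Section~\ref{sec:profil} gives a second-order evolution for $R$ whose leading part, after extracting the rapid time scale via $s:=\tfrac{1}{2}\ln\dot\tau\sim\tfrac{1}{4}\ln\ln t$ and setting $\tilde R(s,y):=R(t,y)$, is the Ornstein--Uhlenbeck-type operator $L:=\nabla\cdot(\nabla+2y)$. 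All other contributions---capillary, viscous, convective---carry prefactors of order $1/\tau^2$ or $(\dot\tau/\tau)^2$, and the integrability $\int_0^\infty\mathcal D\,dt<\infty$ forces them to vanish in $L^2_s W^{-1,1}_y(-1,2)$ along any sequence $s_n\to\infty$. Passing to the limit along a subsequence, the limit $\tilde R_\infty$ satisfies simultaneously $\d_s\tilde R_\infty=L\tilde R_\infty$ and $\d_s\tilde R_\infty=0$, the latter because $\frac{\dot\tau}{\tau^3}\int R|U|^2\,dy$ is integrable in $t$, which kills $\frac{1}{\tau^2}\nabla\cdot(RU)$ in the continuity equation after rescaling. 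The unique stationary solution of $L$ with mass $\|\Gamma\|_{L^1}$ is $\Gamma$, so $\tilde R_\infty=\Gamma$; uniqueness of the limit removes the need for extraction.

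\emph{Part (3).} Inserting the rescaling \eqref{eq:uvFluid} into the energy \eqref{eq:energy-init} and setting $C:=\|\rho_0\|_{L^1}/\|\Gamma\|_{L^1}$, a direct calculation yields
\[
E(t)+d\|\rho_0\|_{L^1}\ln\tau(t)-\|\rho_0\|_{L^1}\ln C=C\,\mathcal E(t)+C\left(\frac{\dot\tau(t)^2}{2}-1\right)\!\int R|y|^2\,dy+\frac{C\dot\tau(t)}{\tau(t)}\!\int R\,y\cdot U\,dy.
\]
Multiplying \eqref{eq:tau-libre} by $\dot\tau$ and integrating gives $\dot\tau^2=4\lambda\ln\tau$, so the coefficient of $\int R|y|^2$ is $C(2\lambda\ln\tau-1)$; the cross term is bounded by Cauchy--Schwarz by $\frac{C\dot\tau}{\tau}\cdot\tau\,\mathcal E=O(\dot\tau)=O(\sqrt{\ln t})$. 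Dividing by $\ln\tau\sim\ln t$ and invoking $E(t)=o(\ln t)$ together with $\sup\mathcal E<\infty$ yields
\[
\lim_{t\to\infty}\int R(t,y)|y|^2\,dy=\frac{d\,\|\Gamma\|_{L^1}}{2\lambda}=\int_{\R^d}|y|^2\Gamma(y)\,dy,
\]
the last equality following from $\int|y|^2 e^{-|y|^2}\,dy=\frac{d}{2}\pi^{d/2}$ and $\lambda=1$ in the isothermal normalization.

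\emph{Main obstacle.} As stressed in Section~\ref{sec:transition}, the Cauchy theory for the isothermal system \eqref{eq:fluide} is quasilinear and delicate: the velocity $U$ is defined only through $\sqrt R\,U\in L^2$, vacuum may occur, and the energy/dissipation identities \eqref{eq:energy-init}--\eqref{eq:evol-pseudo} are \emph{postulated} on the class of global weak solutions rather than derived from the equations. The genuine hard part is therefore not the asymptotic analysis sketched above but its rigorous justification in this low-regularity setting---legitimizing the test-function manipulations of Part~(1), the passage to the limit in nonlinear terms of Part~(2), and the rewriting of the energy of Part~(3)---which is precisely why Theorem~\ref{theo:temps-long} is phrased as an ``if theorem''.
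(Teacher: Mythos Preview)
Your proposal is correct and follows essentially the same strategy as the paper, which explicitly states that the proof transports the three arguments of Section~\ref{sec:elements} (center-of-mass ODE for $I_1,I_2$ with the substitution $\tilde I_2=\tau I_2$, Fokker--Planck limit for the universal profile, and the energy rewriting for the second moment) to the fluid unknowns $(R,U)$, referring to \cite{CCH18} for the rigorous details in the weak-solution setting. Your explicit formula for $I_1(t)$ and your identification of the main obstacle---the low-regularity justification of the test-function manipulations and limit passages---match the paper's stance that this is an ``if theorem''.
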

Essentially, the proof is based on arguments similar to those sketched
in Section~\ref{sec:elements}. As evoked above, it is a bit of a
surprise that the Navier-Stokes term goes through the arguments, and
we refer to \cite{CCH18} for details.
\begin{remark}\label{rem:pressure}
  In the same spirit as the discussion at the end of
  Section~\ref{sec:limit}, the pressure law considered in \cite{CCH18}  is
  more general than exactly isothermal: we assume that $P\in
  C^1([0,\infty[;\R_+)\cap C^2(]0,\infty[;\R_+)$, and $P$ is convex, with $P'(0)>0$. 
\end{remark}
\section{On the existence of weak solutions}
As already evoked, constructing solutions in compressible fluid
mechanics is a difficult question. In the polytropic Euler equation
($\gamma>1$), a suitable change of unknown function (consider
$\rho^{\frac{\gamma-1}{2}}$ instead of $\rho$) makes the system
  hyperbolic symmetric, so the Cauchy problem can be solved in Sobolev
  spaces with sufficiently high regularity, but finite time blow-up
  occurs, typically when starting from smooth, compactly supported
  data, \cite{MUK86,JYC90}. Global, smooth solutions are constructed
  for suitable affine velocities: \cite{Serre97,Gra98}. In the case of
  Korteweg equation, the link with nonlinear Schr\"odinger equations
  has been exploited in \cite{BDD07}, leading to further developments,
  e.g. \cite{AnMa09,AnMa12,Aud12,AH17}. In the presence of the quantum
  Navier-Stokes 
  correction, many results are available, regarding the existence of
  weak solutions, still for $\gamma>1$; see e.g. \cite{BD07,GeLeFl16,Gis-VV15,Jungel,LacroixVasseur,VasseurYuInventiones}, and
  \cite{RoussetBBK} for a survey. However, in the isothermal case
  $\gamma=1$, far less is known: we refer to
  \cite{LeFlochShelukhin2005} for the one-dimensional Euler equation,
  \cite{Jungel} for the quantum Navier-Stokes on $\T^d$ for $d\le 2$. 
\smallbreak

In \cite{CCH-AIF}, we construct weak solutions to \eqref{eq:fluide} in
the presence of viscosity, $\nu>0$. We emphasize two aspects in this
construction, which seem to be the most important contributions of
this work:
\begin{itemize}
\item We consider solutions on the whole space $\R^d$, while most of
  the previous references assume a periodic setting, $x\in \T^d$
  ($x\in \R$ in \cite{LeFlochShelukhin2005}).
\item We gain positivity properties by working on the intermediary
  system \eqref{eq:RU}. 
\end{itemize}
Both points are intimately connected, as the change of unknown
functions \eqref{eq:uvFluid} involves a time-dependent rescaling. The
reasons why most of the references consider the periodic setting $x\in
\T^d$ seem to be mostly that compactness in space then comes from
free, and integrations by parts can be performed freely. The periodic
case is also rather 
convenient for approximating, among others in Lebesgue spaces, the
initial density by a density bounded away from zero, a step which
would require some modification on $\R^d$. Note also that this property is
classically propagated  by the flow in a suitable regularized continuity equation
(see e.g. \cite{Fei04,Jungel}), and such a property is needed in the
presence of cold pressure 
and regularizing terms (see e.g. \cite{Gis-VV15,VasseurYu}). 

For these reasons, to construct a solution $(R,U)$ to \eqref{eq:RU} on
$\R^d$, we first replace $\R^d$ with a periodic box $\T^d_\ell$ of
size $\ell>0$, where $\ell$ is aimed at going to infinity at the last
step of the proof. 

We refer to \cite{CCH-AIF} for the details, and conclude this section by
pointing out another important tool, which has proven very useful in
the context of compressible Navier-Stokes equations with a density-dependent
velocity, known as BD-entropy, after \cite{BD04,Bre-De-CKL-03}. It
involves an effective velocity, which reads $U+\nu \nabla \ln R$ in
the case of \eqref{eq:RU}:
\begin{equation*}
  \EBD (R,U)
= \frac{1}{2\tau^2} \int_{\R^d} 
\left( R|U + \nu \nabla \log R|^2 + \eps^2 |\nabla \sqrt{R}|^2  \right)
+ \int_{\R^d} \left( R|y|^2 + R\log R \right).
\end{equation*}
The evolution of this BD-entropy is given formally, for $t\ge 0$, by
\begin{equation}\label{eq:EBD}
\begin{aligned}
  \EBD(R , U)(t)  &+\int_0^t\DBD(R , U)(s) d s \\
   &=\EBD(R_0 , U_0) + \nu \int_0^t \frac{2 d}{\tau^2} \int_{\R^d} R
  + \nu \int_0^t \frac{\dot \tau}{\tau^3} \int_{\R^d} R \nabla\cdot U
  , 
  \end{aligned}
\end{equation}
where the above dissipation is defined by
\begin{equation}\label{eq:DBD}
\begin{aligned}
  \DBD(R,U)
&= \frac{\dot \tau}{\tau^3} \int \left( R|U|^2 + \eps^2 |\nabla
  \sqrt{R}|^2 \right) 
+ \frac{\nu}{\tau^4} \int_{\R^d} R |\mathbb A U|^2  \\
&\quad
+\frac{\nu \eps^2}{\tau^4}   \int R|\nabla^2  \log R|^2
+\frac{4 \nu}{\tau^2} \int |\nabla \sqrt{R}|^2 ,
\end{aligned}
\end{equation}
with $\mathbb AU := \frac12 (\nabla U - \nabla U^\top)$ the skew-symmetric part of $\nabla U.$
Hence putting together the energy and the BD-entropy equalities, it holds
\begin{equation*}
  \EE(t)  + \EBD(t) +\int_0^t \(\DD(s) + \DBD(s) \) ds = \EE(0) + \EBD(0) 
  + \nu \int_0^t \frac{2 d}{\tau^2} \int_{\R^d} R , \quad t\ge 0.
\end{equation*}
Thanks to the conservation of mass and the fact that
$\int_0^\infty \tau^{-2}(t) dt < \infty$, the last term is
uniformly bounded.   
\begin{theorem}[\cite{CCH-AIF}]
   Assume $\nu>0$, $\eps\ge 0$.
   Let $(\sqrt{R_0} , \Lambda_0 = (\sqrt{R} U)_0 ) \in L^2(\R^d) \times L^2(\R^d)$ satisfy $\EE(0) < \infty$, $\EBD(0) < \infty$,
as well as the compatibility conditions
\[
\sqrt{R_0} \ge 0 \text{ a.e.\ on } \R^d,  \quad   (\sqrt{R}U)_0=
0 \text{ a.e.\ on } \{\sqrt{R_0} = 0 \}.
\]
There exists  at least one global weak solution to \eqref{eq:RU}, which
satisfies moreover the energy and BD-entropy inequalities:
There exist absolute constants $C,C'$ such that, for almost all $t
\ge 0$,  there holds: 
\begin{align}  \label{eq:EE+DD<=EE0}
\mathcal E(t) + \int_0^t \mathcal D(s) ds &  \le C (\mathcal E(0)), \\
\label{eq_EBD_weak}
\mathcal E_{\mathrm{BD}}(t) + \int_0^t \mathcal D_{\mathrm{BD}}(s) d s &\le C' (\mathcal E(0), \mathcal E_{\mathrm{BD}}(0)) ,
\end{align}
with $\mathcal E,\mathcal D,\mathcal E_{\mathrm{BD}},\mathcal
D_{\mathrm{BD}}$ as defined in
\eqref{eq:pseudo-energy-fluide}-\eqref{eq:evol-pseudo}-\eqref{eq:EBD}-\eqref{eq:DBD}.  
\end{theorem}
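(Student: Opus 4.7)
The plan is to build the weak solution as a limit of smoother approximations to the rescaled system \eqref{eq:RU}, working on a periodic torus $\T_\ell^d$ and letting $\ell\to\infty$ at the final step, as outlined in Section~\ref{sec:transition} of the excerpt. Concretely, I would fix $\ell>0$, approximate the initial data by $(\sqrt{R_0^{\ell,\delta}},\Lambda_0^{\ell,\delta})$ on $\T_\ell^d$ such that $R_0^{\ell,\delta}$ is bounded below by some $\delta>0$ (so that $\log R$ is regular) and such that the energy and BD-entropy of these data are bounded uniformly in $(\ell,\delta)$ by a constant depending only on $\mathcal E(0)$ and $\EBD(0)$. One then solves a suitably regularized system (adding cold pressure and, if needed, a small Newtonian viscosity $\mu\Delta U$ to guarantee existence at fixed regularization level), invoking classical results on compressible Navier–Stokes–Korteweg systems on $\T^d$ cited in the excerpt (Jüngel, Vasseur–Yu, Lacroix–Vasseur, etc.) after checking that the time-dependent coefficients $\tau(t)^{-2}$, $\dot\tau/\tau^3$ are bounded on bounded time intervals.

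The two key bookkeeping identities \eqref{eq:evol-pseudo} and \eqref{eq:EBD} are then derived at the approximate level. Identity \eqref{eq:evol-pseudo} follows by multiplying the momentum equation in \eqref{eq:RU} by $U$ and integrating by parts, the quantum pressure contributing via the standard rewriting recalled in Section~\ref{sec:transition}. Identity \eqref{eq:EBD} is obtained by introducing the effective velocity $W=U+\nu\nabla\log R$: because of the density-dependent viscosity, the combination of $\nabla\cdot(R\,\mathbb D U)$ with $\nabla R$ and with the mass equation produces a clean evolution for $W$, and testing against $W$ yields the extra dissipation of $\nabla^2\log R$ and $\nabla\sqrt R$ in $\DBD$. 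Two observations are essential at this step: first, all the non-sign-definite contributions coming from the rescaling (the terms with $\dot\tau/\tau^3$ on the right of \eqref{eq:EBD}, together with the forcing $2d\nu/\tau^2$) are integrable in time since $\int_0^\infty \tau^{-2}(t)dt<\infty$ and $\int_0^\infty \dot\tau(t)/\tau^3(t)dt<\infty$, as pointed out just before the theorem; second, the negative part of $\int R\log R$ is controlled by the moment $\int|y|^2 R$ exactly as in the Schr\"odinger a priori estimate of Lemma~\ref{lem:APv}, via a Csisz\'ar–Kullback / Gagliardo–Nirenberg argument of the form \eqref{eq:GNdual}. A Gronwall argument then yields \eqref{eq:EE+DD<=EE0} and \eqref{eq_EBD_weak} uniformly in $(\ell,\delta)$.

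From the uniform bounds I would extract compactness in the standard Bresch–Desjardins framework: $\sqrt{R^{\ell,\delta}}$ is bounded in $L^\infty_t H^1_x$ (locally in $y$), the $(4\nu/\tau^2)|\nabla\sqrt R|^2$ term in $\DBD$ gives $\sqrt R\in L^2_t H^1_x$, and the continuity equation provides time regularity of $R^{\ell,\delta}$, so Aubin–Lions yields strong convergence of $\sqrt{R^{\ell,\delta}}$ in $L^2_{t,x,\mathrm{loc}}$. The compatibility condition $(\sqrt R U)_0=0$ on $\{\sqrt{R_0}=0\}$ makes the momentum $\Lambda^{\ell,\delta}=\sqrt{R^{\ell,\delta}}U^{\ell,\delta}$ well defined, and the standard passage to the limit (as in Lacroix-Criqui–Vasseur and Vasseur–Yu) handles the convective and quantum pressure terms, first at fixed $\ell$ with $\delta\to0$, and then taking $\ell\to\infty$.

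The main obstacle is precisely the final limit $\ell\to\infty$, which is where the paper's contribution is concentrated. On the torus, compactness of $\sqrt{R}$ comes for free spatially; on $\R^d$ one has to avoid mass leaking to infinity and to recover a genuine global weak formulation. Here the confining term $\int R|y|^2$, present in the pseudo-energy $\mathcal E$ thanks to the rescaling \eqref{eq:uvFluid}, provides tightness of $R^{\ell}$ on $\R^d$ uniformly in $\ell$ and $t$, and the BD-entropy gives enough integrability on $\nabla\sqrt R$ to prevent concentration. The remaining delicate point is to ensure that the approximate data on $\T_\ell^d$ can be chosen to converge, as $\ell\to\infty$, to the prescribed data on $\R^d$ while keeping $\EBD$ uniformly bounded — here the fact that $\mathcal E_{BD}(0)<\infty$ forces $R_0$ to have finite moment and $\nabla\log R_0$ to be controlled, which is exactly what is needed to construct such a consistent approximation. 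Once these steps are carried out, the limiting $(R,U)$ inherits \eqref{eq:EE+DD<=EE0} and \eqref{eq_EBD_weak} from Fatou's lemma applied to the lower-semicontinuous functionals $\mathcal E,\DD,\EBD,\DBD$.
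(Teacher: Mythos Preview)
Your proposal is correct and follows essentially the same route as the paper's sketch (which in turn refers to \cite{CCH-AIF}): regularize on a periodic box $\T_\ell^d$ with density bounded away from zero, derive the pseudo-energy and BD-entropy identities, exploit the cancellation of the cross term $\nu\frac{\dot\tau}{\tau^3}\int R\,\nabla\cdot U$ when summing them so that only the integrable forcing $\nu\int_0^t\frac{2d}{\tau^2}\int R$ survives, control the negative part of $\int R\log R$ by the moment as in Lemma~\ref{lem:APv}, and finally let $\ell\to\infty$ using the confinement $\int|y|^2R$ for tightness. The only minor remark is that no genuine Gronwall step is needed once you add $\mathcal E$ and $\EBD$: the bad terms cancel exactly, as the paper notes just before the theorem.
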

This result implies existence results for \eqref{eq:fluide}, see
\cite{CCH-AIF}. Note however that this approach does not seem to
provide any relevant information regarding the energy
\eqref{eq:energy-init}. 
\section{From isothermal to polytropic}

The method of proof developed to study \eqref{eq:logNLS} and
\eqref{eq:fluide} turns out to bring some information in the case of
\eqref{eq:NLS} and polytropic fluids, as shown in
\cite{CCH-p}. Replace \eqref{eq:tau-libre} with
\begin{equation}\label{eq:tau}
     \ddot \tau = \frac{\alpha}{2\tau^{1+\alpha}},\quad \tau(0)=1,\ \dot
  \tau(0)=0.
\end{equation}
We note that this ordinary differential equation was already
considered in \cite{CidDolbeault-p}, in a context very similar to the
Schr\"odinger equation considered in \cite{CCH-p}, for a different study.
The large time behavior of $\tau$ turns out to be independent of $\alpha>0$:
\begin{lemma}\label{lem:tau}
  Let $\alpha>0$. 
  The ordinary differential equation \eqref{eq:tau}
 has a unique, global, smooth solution $\tau\in C^\infty(\R;\R_+)$. In
addition, its large time behavior is given by
\begin{equation*}
  \dot \tau(t)\Tend t {\infty} 1,\quad\text{hence }\tau(t)\Eq t
  {\infty} t.
\end{equation*}
\end{lemma}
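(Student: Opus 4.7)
The plan is to multiply the equation by $\dot\tau$ to obtain a conserved quantity, then use that conservation law to get a coercive bound on $\tau$ from below, which simultaneously yields global existence and the stated asymptotics.

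First, since the map $\tau\mapsto \frac{\alpha}{2\tau^{1+\alpha}}$ is smooth on $(0,\infty)$, the Cauchy--Lipschitz theorem provides a unique local smooth solution on a maximal interval containing $0$ on which $\tau$ stays in a compact subset of $(0,\infty)$. Multiplying \eqref{eq:tau} by $\dot\tau$ and integrating from $0$ to $t$, using $\tau(0)=1$ and $\dot\tau(0)=0$, yields
\begin{equation*}
  \frac{\dot\tau(t)^2}{2}+\frac{1}{2\tau(t)^{\alpha}} = \frac{1}{2},
  \quad\text{i.e.}\quad
  \dot\tau(t)^2 = 1-\frac{1}{\tau(t)^{\alpha}}.
\end{equation*}
Because the left-hand side is nonnegative, we must have $\tau(t)\ge 1$ everywhere the solution is defined, and $|\dot\tau(t)|\le 1$. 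In particular $\tau$ cannot leave $(0,\infty)$ in finite time (neither by going to $0$ nor by blowing up), so the maximal interval is $\R$ and $\tau\in C^\infty(\R)$ by bootstrapping on the equation.

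Next I establish monotonicity and the limit of $\dot\tau$. Since $\ddot\tau(0)=\alpha/2>0$ and $\dot\tau(0)=0$, $\dot\tau$ becomes positive for small $t>0$. The identity $\dot\tau^2=1-\tau^{-\alpha}$ shows that $\dot\tau$ can vanish only when $\tau=1$; but $\ddot\tau = \alpha/(2\tau^{1+\alpha})>0$ everywhere, so any such vanishing is isolated, and combined with $\dot\tau(0)=0$ being the unique stationary instant, we conclude $\dot\tau(t)>0$ for all $t>0$. Thus $\tau$ is strictly increasing on $[0,\infty)$, and either $\tau(t)\to\tau_\infty<\infty$ or $\tau(t)\to\infty$. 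The first alternative is ruled out: it would force $\dot\tau(t)\to \sqrt{1-\tau_\infty^{-\alpha}}$, a strictly positive limit (since $\tau_\infty>1$ as $\ddot\tau(0)>0$), contradicting the boundedness of $\tau$. Hence $\tau(t)\to\infty$, and taking the limit in $\dot\tau^2=1-\tau^{-\alpha}$ together with $\dot\tau>0$ gives $\dot\tau(t)\to 1$. Integrating this limit (for instance via L'H\^opital or Ces\`aro), $\tau(t)/t\to 1$ as $t\to\infty$, which is the claimed asymptotic.

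The main (very mild) subtlety is handling the initial instant, where $\dot\tau$ vanishes and the factorized form $\dot\tau=\sqrt{1-\tau^{-\alpha}}$ is only Lipschitz from the right. This is resolved purely by the sign of $\ddot\tau(0)$ rather than by the square-root ODE itself; everything else is a direct consequence of the energy identity.
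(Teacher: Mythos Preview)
Your proof is correct. The paper itself does not prove Lemma~\ref{lem:tau}; it merely states the result and points to \cite{CidDolbeault-p} and \cite{CCH-p}, so there is nothing to compare against in the present text. Your argument via the conserved energy $\dot\tau^2 + \tau^{-\alpha}=1$ is the natural one and matches the spirit of the computations the paper does carry out in the case $\alpha=0$ (Section~\ref{sec:gaussian}, equation~\eqref{eq:dotr}).

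One stylistic remark: your monotonicity step is more elaborate than necessary. Since $\ddot\tau=\alpha/(2\tau^{1+\alpha})>0$ for all $t$, $\dot\tau$ is strictly increasing on the whole interval of existence, hence $\dot\tau(t)>\dot\tau(0)=0$ for every $t>0$ immediately; there is no need to invoke the zero set of $\dot\tau$ via the energy identity. This also gives $\tau(t)\to\infty$ directly (e.g.\ $\tau(t)\ge 1+\dot\tau(1)(t-1)$ for $t\ge 1$), without the detour through a hypothetical finite limit $\tau_\infty$. For completeness you might also note that the equation is invariant under $t\mapsto -t$, so uniqueness forces $\tau$ to be even, which is why the solution is global on all of $\R$ and not only on $\R_+$.
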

We see that the value of the parameter $\alpha>0$ does not influence
the large time behavior, at leading order. And in view of
Theorem~\ref{theo:logNLSdisp}, the behavior changes for $\alpha=0$ (if
the numerator in \eqref{eq:tau}  is not canceled!), by
a logarithmic factor (which turns out to be the key of
e.g. Corollary~\ref{cor:growth}).  All the algebra presented so far
can then be resumed: we change unknown functions as in
\eqref{eq:uvDMJ} and \eqref{eq:uvFluid}, and obtain equations
analogous to \eqref{eq:v} and \eqref{eq:RU}. The choice of $\alpha$ is
suggested by the value of $\si$ (or, equivalently,
$\gamma$). Informally, the main result for fluid dynamics in
\cite{CCH-p} is again an ``if theorem'', as in \cite{CCH18}: every
solution to the analogue of \eqref{eq:RU}, where, among others,
$\nabla R$ is replaced by $\nabla R^\gamma$, satisfying suitable
conditions,  has an asymptotic profile,
that is, there exists $R_\infty\in  \mathbb P(\mathbb R^d)$  the set
of probability measures on $\R^d$, with two finite momenta, such that
\[
R(t,\cdot) \rightharpoonup R_{\infty} \quad \text{ in }  \mathbb
P(\mathbb R^d), \ \text{ as }t\to \infty.
\]
We have in addition $R_\infty\in L^1(\R^d)$ (at least) in the
following cases:
\begin{itemize}
\item $\eps = \nu=0$ and $1<\gamma\le 1+2/d$,
\item $\eps>0$, $\nu=0$ and $ \gamma >1 $,
\item $\eps \ge 0$, $\nu>0$ and $1<\gamma\le 1+1/d$.
\end{itemize}
The results of \cite{Serre97,Gra98} in the case of the Euler equation
($\eps=\nu=0$) and the scattering results for the nonlinear
Schr\"odinger equation (for the Korteweg equation $\eps>0=\nu$) show
that unlike what has been established in the isothermal case, the
profile $R_\infty$ is not universal.

\bibliographystyle{abbrv}
\bibliography{biblio}

\begin{thebibliography}{10}

\bibitem{AbSt64}
M.~Abramowitz and I.~A. Stegun.
\newblock {\em Handbook of mathematical functions with formulas, graphs, and
  mathematical tables}, volume~55 of {\em National Bureau of Standards Applied
  Mathematics Series}.
\newblock For sale by the Superintendent of Documents, U.S. Government Printing
  Office, Washington, D.C., 1964.

\bibitem{ACMA}
T.~Alazard and R.~Carles.
\newblock Loss of regularity for super-critical nonlinear {S}chr\"odinger
  equations.
\newblock {\em Math. Ann.}, 343(2):397--420, 2009.

\bibitem{LogSob}
C.~An{\'e}, S.~Blach{\`e}re, D.~Chafa{\"{\i}}, P.~Foug{\`e}res, I.~Gentil,
  F.~Malrieu, C.~Roberto, and G.~Scheffer.
\newblock {\em Sur les in\'egalit\'es de {S}obolev logarithmiques}, volume~10
  of {\em Panoramas et Synth\`eses [Panoramas and Syntheses]}.
\newblock Soci\'et\'e Math\'ematique de France, Paris, 2000.
\newblock With a preface by Dominique Bakry and Michel Ledoux.

\bibitem{AnMa09}
P.~Antonelli and P.~Marcati.
\newblock On the finite energy weak solutions to a system in quantum fluid
  dynamics.
\newblock {\em Comm. Math. Phys.}, 287(2):657--686, 2009.

\bibitem{AnMa12}
P.~Antonelli and P.~Marcati.
\newblock The quantum hydrodynamics system in two space dimensions.
\newblock {\em Arch. Ration. Mech. Anal.}, 203(2):499--527, 2012.

\bibitem{Ar16}
A.~H. Ardila.
\newblock Orbital stability of {G}ausson solutions to logarithmic
  {S}chr\"odinger equations.
\newblock {\em Electron. J. Differential Equations}, pages Paper No. 335, 9,
  2016.

\bibitem{AMTU}
A.~Arnold, P.~Markowich, G.~Toscani, and A.~Unterreiter.
\newblock On convex {S}obolev inequalities and the rate of convergence to
  equilibrium for {F}okker-{P}lanck type equations.
\newblock {\em Comm. Partial Differential Equations}, 26(1-2):43--100, 2001.

\bibitem{Arn06}
V.~I. Arnold.
\newblock {\em Ordinary differential equations}.
\newblock Universitext. Springer-Verlag, Berlin, 2006.
\newblock Translated from the Russian by Roger Cooke, Second printing of the
  1992 edition.

\bibitem{Aud12}
C.~Audiard.
\newblock Dispersive smoothing for the {E}uler-{K}orteweg model.
\newblock {\em SIAM J. Math. Anal.}, 44(4):3018--3040, 2012.

\bibitem{AH17}
C.~Audiard and B.~Haspot.
\newblock Global well-posedness of the {E}uler-{K}orteweg system for small
  irrotational data.
\newblock {\em Comm. Math. Phys.}, 351(1):201--247, 2017.

\bibitem{BEC}
A.~V. Avdeenkov and K.~G. Zloshchastiev.
\newblock Quantum {B}ose liquids with logarithmic nonlinearity:
  {S}elf-sustainability and emergence of spatial extent.
\newblock {\em J. Phys. B: Atomic, Molecular Optical Phys.}, 44(19):195303,
  2011.

\bibitem{BCST19b}
W.~Bao, R.~Carles, C.~Su, and Q.~Tang.
\newblock Regularized numerical methods for the logarithmic {S}chr{\"o}dinger
  equation.
\newblock {\em Numer. Math.}, 143(2):461--487, 2019.

\bibitem{Barab}
J.~E. Barab.
\newblock Nonexistence of asymptotically free solutions for nonlinear
  {S}chr\"odinger equation.
\newblock {\em J. Math. Phys.}, 25:3270--3273, 1984.

\bibitem{BDD07}
S.~Benzoni-Gavage, R.~Danchin, and S.~Descombes.
\newblock On the well-posedness for the {E}uler-{K}orteweg model in several
  space dimensions.
\newblock {\em Indiana Univ. Math. J.}, 56(4):1499--1579, 2007.

\bibitem{BeCa81}
H.~Berestycki and T.~Cazenave.
\newblock Instabilit\'{e} des \'{e}tats stationnaires dans les \'{e}quations de
  {S}chr\"{o}dinger et de {K}lein-{G}ordon non lin\'{e}aires.
\newblock {\em C. R. Acad. Sci. Paris S\'{e}r. I Math.}, 293(9):489--492, 1981.

\bibitem{BiMy76}
I.~Bia{\l}ynicki-Birula and J.~Mycielski.
\newblock Nonlinear wave mechanics.
\newblock {\em Ann. Physics}, 100(1-2):62--93, 1976.

\bibitem{BiMy79}
I.~Bia{\l}ynicki-Birula and J.~Mycielski.
\newblock Gaussons: {S}olitons of the logarithmic {S}chr{\"o}dinger equation.
\newblock {\em Special issue on solitons in physics, Phys. Scripta},
  20:539--544, 1979.

\bibitem{Bo96}
J.~Bourgain.
\newblock On the growth in time of higher {S}obolev norms of smooth solutions
  of {H}amiltonian {PDE}.
\newblock {\em Internat. Math. Res. Notices}, 1996(6):277--304, 1996.

\bibitem{BD04}
D.~Bresch and B.~Desjardins.
\newblock Quelques mod\`eles diffusifs capillaires de type {K}orteweg.
\newblock {\em Comptes {R}endus {M}\'ecanique}, 332(11):881--886, 2004.

\bibitem{BD07}
D.~Bresch and B.~Desjardins.
\newblock On the existence of global weak solutions to the {N}avier-{S}tokes
  equations for viscous compressible and heat conducting fluids.
\newblock {\em J. Math. Pures Appl. (9)}, 87(1):57--90, 2007.

\bibitem{Bre-De-CKL-03}
D.~Bresch, B.~Desjardins, and C.-K. Lin.
\newblock On some compressible fluid models: {K}orteweg, lubrication, and
  shallow water systems.
\newblock {\em Comm. Partial Differential Equations}, 28(3-4):843--868, 2003.

\bibitem{BrJa18}
D.~Bresch and P.-E. Jabin.
\newblock Global existence of weak solutions for compressible {N}avier-{S}tokes
  equations: thermodynamically unstable pressure and anisotropic viscous stress
  tensor.
\newblock {\em Ann. of Math. (2)}, 188(2):577--684, 2018.

\bibitem{BrVaYu-p}
D.~Bresch, A.~F. Vasseur, and C.~Yu.
\newblock Global existence of entropy-weak solutions to the compressible
  {Navier-Stokes} equations with non-linear density dependent viscosities.
\newblock {\em J. Eur. Math. Soc. (JEMS)}, 24(5):1791--1837, 2022.

\bibitem{BruMe10}
S.~Brull and F.~M\'ehats.
\newblock Derivation of viscous correction terms for the isothermal quantum
  {E}uler model.
\newblock {\em ZAMM Z. Angew. Math. Mech.}, 90(3):219--230, 2010.

\bibitem{buljan}
H.~Buljan, A.~{\v{S}}iber, M.~Solja{\v{c}}i{\'c}, T.~Schwartz, M.~Segev, and
  D.~Christodoulides.
\newblock Incoherent white light solitons in logarithmically saturable
  noninstantaneous nonlinear media.
\newblock {\em Phys. Rev. E}, 68(3):036607, 2003.

\bibitem{Ca11}
R.~Carles.
\newblock Nonlinear {S}chr{\"o}dinger equation with time dependent potential.
\newblock {\em Commun. Math. Sci.}, 9(4):937--964, 2011.

\bibitem{CCH-AIF}
R.~Carles, K.~Carrapatoso, and M.~Hillairet.
\newblock Global weak solutions for quantum isothermal fluids.
\newblock {\em Ann. Inst. Fourier (Grenoble)}.
\newblock To appear. Archived at
  \url{https://hal.archives-ouvertes.fr/hal-02116596}.

\bibitem{CCH18}
R.~Carles, K.~Carrapatoso, and M.~Hillairet.
\newblock Rigidity results in generalized isothermal fluids.
\newblock {\em Annales Henri Lebesgue}, 1:47--85, 2018.

\bibitem{CCH-p}
R.~Carles, K.~Carrapatoso, and M.~Hillairet.
\newblock Large-time behavior of compressible polytropic fluids and nonlinear
  {S}chr{\"o}dinger equation.
\newblock {\em Quart. Appl. Math.}, 80(3):549--574, 2022.

\bibitem{CaDaSa12}
R.~Carles, R.~Danchin, and J.-C. Saut.
\newblock Madelung, {G}ross-{P}itaevskii and {K}orteweg.
\newblock {\em Nonlinearity}, 25(10):2843--2873, 2012.

\bibitem{CaGa18}
R.~Carles and I.~Gallagher.
\newblock Universal dynamics for the defocusing logarithmic {S}chr{\"o}dinger
  equation.
\newblock {\em Duke Math. J.}, 167(9):1761--1801, 2018.

\bibitem{Caz83}
T.~Cazenave.
\newblock Stable solutions of the logarithmic {S}chr\"odinger equation.
\newblock {\em Nonlinear Anal.}, 7(10):1127--1140, 1983.

\bibitem{CazCourant}
T.~Cazenave.
\newblock {\em Semilinear {S}chr\"odinger equations}, volume~10 of {\em Courant
  Lecture Notes in Mathematics}.
\newblock New York University Courant Institute of Mathematical Sciences, New
  York, 2003.

\bibitem{CaHa80}
T.~Cazenave and A.~Haraux.
\newblock \'{E}quations d'\'evolution avec non lin\'earit\'e logarithmique.
\newblock {\em Ann. Fac. Sci. Toulouse Math. (5)}, 2(1):21--51, 1980.

\bibitem{JYC90}
J.-Y. Chemin.
\newblock Dynamique des gaz \`a masse totale finie.
\newblock {\em Asymptotic Anal.}, 3(3):215--220, 1990.

\bibitem{CidDolbeault-p}
C.~Cid and J.~{Dolbeault}.
\newblock Defocusing {N}onlinear {S}chr\"odinger equation: confinement,
  stability and asymptotic stability.
\newblock Unpublished work, available at
  \url{https://citeseerx.ist.psu.edu/viewdoc/download?doi=10.1.1.7.1144&rep=rep1&type=pdf},
  2001.

\bibitem{Iturbulent}
J.~Colliander, M.~Keel, G.~Staffilani, H.~Takaoka, and T.~Tao.
\newblock Transfer of energy to high frequencies in the cubic defocusing
  nonlinear {S}chr\"odinger equation.
\newblock {\em Invent. Math.}, 181(1):39--113, 2010.

\bibitem{CoteLeCoz2011}
R.~C\^{o}te and S.~Le~Coz.
\newblock High-speed excited multi-solitons in nonlinear {S}chr\"{o}dinger
  equations.
\newblock {\em J. Math. Pures Appl. (9)}, 96(2):135--166, 2011.

\bibitem{Ehrenfest}
P.~Ehrenfest.
\newblock Bemerkung {\"u}ber die angenaherte {G}\"ultigkeit der klassischen
  {M}echanik innerhalb der {Q}uantenmechanik.
\newblock {\em Zeitschrift f\"ur Physik}, 45(7-8):455--457, 1927.

\bibitem{Fei04}
E.~Feireisl.
\newblock {\em Dynamics of viscous compressible fluids}, volume~26 of {\em
  Oxford Lecture Series in Mathematics and its Applications}.
\newblock Oxford University Press, Oxford, 2004.

\bibitem{FeDCDS}
G.~Ferriere.
\newblock The focusing logarithmic {S}chr\"{o}dinger equation: analysis of
  breathers and nonlinear superposition.
\newblock {\em Discrete Contin. Dyn. Syst.}, 40(11):6247--6274, 2020.

\bibitem{FeAPDE}
G.~Ferriere.
\newblock Convergence rate in {W}asserstein distance and semiclassical limit
  for the defocusing logarithmic {S}chr\"{o}dinger equation.
\newblock {\em Anal. PDE}, 14(2):617--666, 2021.

\bibitem{FeAIHP}
G.~Ferriere.
\newblock Existence of multi-solitons for the focusing logarithmic non-linear
  {S}chr\"{o}dinger equation.
\newblock {\em Ann. Inst. H. Poincar\'{e} Anal. Non Lin\'{e}aire},
  38(3):841--875, 2021.

\bibitem{GG17}
P.~G{\'e}rard and S.~Grellier.
\newblock {\em The cubic {Szeg{\"{o}}} equation and {Hankel} operators}, volume
  389 of {\em Ast{\'e}risque}.
\newblock Paris: Soci{\'e}t{\'e} Math{\'e}matique de France (SMF), 2017.

\bibitem{GLPR18}
P.~G\'{e}rard, E.~Lenzmann, O.~Pocovnicu, and P.~Rapha\"{e}l.
\newblock A two-soliton with transient turbulent regime for the cubic half-wave
  equation on the real line.
\newblock {\em Ann. PDE}, 4(1):Paper No. 7, 166, 2018.

\bibitem{GeLeFl16}
P.~Germain and P.~LeFloch.
\newblock Finite energy method for compressible fluids: the
  {N}avier-{S}tokes-{K}orteweg model.
\newblock {\em Comm. Pure Appl. Math.}, 69(1):3--61, 2016.

\bibitem{Gis-VV15}
M.~Gisclon and I.~Lacroix-Violet.
\newblock About the barotropic compressible quantum {N}avier-{S}tokes
  equations.
\newblock {\em Nonlinear Anal.}, 128:106--121, 2015.

\bibitem{Glassey}
R.~T. Glassey.
\newblock On the blowing up of solutions to the {C}auchy problem for nonlinear
  {S}chr\"odinger equations.
\newblock {\em J. Math. Phys.}, 18:1794--1797, 1977.

\bibitem{Gra98}
M.~Grassin.
\newblock Global smooth solutions to {E}uler equations for a perfect gas.
\newblock {\em Indiana Univ. Math. J.}, 47(4):1397--1432, 1998.

\bibitem{GHP16}
M.~Guardia, E.~Haus, and M.~Procesi.
\newblock Growth of {S}obolev norms for the analytic {NLS} on {$\mathbb{T}^2$}.
\newblock {\em Adv. Math.}, 301:615--692, 2016.

\bibitem{GuLoNi10}
P.~Guerrero, J.~L{\'o}pez, and J.~Nieto.
\newblock Global solvability of the 3d logarithmic schr{\"o}dinger equation.
\newblock {\em Nonlinear Analysis: Real World Applications}, 11(1):79--87,
  2010.

\bibitem{GLMN12}
P.~Guerrero, J.~L. L{\'o}pez, J.~Montejo-G{\'a}mez, and J.~Nieto.
\newblock Wellposedness of a nonlinear, logarithmic {Schr{\"o}dinger} equation
  of {Doebner}-{Goldin} type modeling quantum dissipation.
\newblock {\em J. Nonlinear Sci.}, 22(5):631--663, 2012.

\bibitem{Hag80}
G.~A. Hagedorn.
\newblock Semiclassical quantum mechanics. {I}. {T}he {$\hbar \rightarrow 0$}
  limit for coherent states.
\newblock {\em Comm. Math. Phys.}, 71(1):77--93, 1980.

\bibitem{Hag81}
G.~A. Hagedorn.
\newblock Semiclassical quantum mechanics. {III}. {T}he large order asymptotics
  and more general states.
\newblock {\em Ann. Physics}, 135(1):58--70, 1981.

\bibitem{HPTV15}
Z.~Hani, B.~Pausader, N.~Tzvetkov, and N.~Visciglia.
\newblock Modified scattering for the cubic {Schr{\"o}dinger} equation on
  product spaces and applications.
\newblock {\em Forum Math. Pi}, 3:63, 2015.
\newblock Id/No e4.

\bibitem{hansson}
T.~Hansson, D.~Anderson, and M.~Lisak.
\newblock Propagation of partially coherent solitons in saturable logarithmic
  media: A comparative analysis.
\newblock {\em Phys. Rev. A}, 80(3):033819, 2009.

\bibitem{HayashiM2018}
M.~Hayashi.
\newblock A note on the nonlinear {S}chr\"{o}dinger equation in a general
  domain.
\newblock {\em Nonlinear Anal.}, 173:99--122, 2018.

\bibitem{HN06}
N.~Hayashi and P.~Naumkin.
\newblock Domain and range of the modified wave operator for {S}chr\"odinger
  equations with a critical nonlinearity.
\newblock {\em Comm. Math. Phys.}, 267(2):477--492, 2006.

\bibitem{Hef85}
E.~F. Hefter.
\newblock Application of the nonlinear {S}chr\"odinger equation with a
  logarithmic inhomogeneous term to nuclear physics.
\newblock {\em Phys. Rev. A}, 32:1201--1204, 1985.

\bibitem{Hepp}
K.~Hepp.
\newblock The classical limit for quantum mechanical correlation functions.
\newblock {\em Comm. Math. Phys.}, 35:265--277, 1974.

\bibitem{HeRe80}
E.~S. Hernandez and B.~Remaud.
\newblock General properties of {G}ausson-conserving descriptions of quantal
  damped motion.
\newblock {\em Physica A}, 105:130--146, 1980.

\bibitem{Jungel}
A.~J\"ungel.
\newblock Global weak solutions to compressible {N}avier-{S}tokes equations for
  quantum fluids.
\newblock {\em SIAM J. Math. Anal.}, 42(3):1025--1045, 2010.

\bibitem{KEB00}
W.~Krolikowski, D.~Edmundson, and O.~Bang.
\newblock Unified model for partially coherent solitons in logarithmically
  nonlinear media.
\newblock {\em Phys. Rev. E}, 61:3122--3126, 2000.

\bibitem{LacroixVasseur}
I.~Lacroix-Violet and A.~Vasseur.
\newblock Global weak solutions to the compressible quantum {N}avier--{S}tokes
  equation and its semi-classical limit.
\newblock {\em J. Math. Pures Appl. (9)}, 114:191--210, 2018.

\bibitem{LeFlochShelukhin2005}
P.~G. LeFloch and V.~Shelukhin.
\newblock Symmetries and {G}lobal {S}olvability of the {I}sothermal {G}as
  {D}ynamics {E}quations.
\newblock {\em Arch. Ration. Mech. Anal.}, 175:389--430, 2005.

\bibitem{Lio98}
P.-L. Lions.
\newblock {\em Mathematical topics in fluid mechanics. {V}ol. 2}, volume~10 of
  {\em Oxford Lecture Series in Mathematics and its Applications}.
\newblock The Clarendon Press, Oxford University Press, New York, 1998.
\newblock Compressible models, Oxford Science Publications.

\bibitem{MUK86}
T.~Makino, S.~Ukai, and S.~Kawashima.
\newblock Sur la solution \`a support compact de l'\'equation d'{E}uler
  compressible.
\newblock {\em Japan J. Appl. Math.}, 3(2):249--257, 1986.

\bibitem{MartelMerle2006}
Y.~Martel and F.~Merle.
\newblock Multi solitary waves for nonlinear {S}chr\"{o}dinger equations.
\newblock {\em Ann. Inst. H. Poincar\'{e} Anal. Non Lin\'{e}aire},
  23(6):849--864, 2006.

\bibitem{DMFGL03}
S.~D. Martino, M.~Falanga, C.~Godano, and G.~Lauro.
\newblock Logarithmic {S}chr{\"o}dinger-like equation as a model for magma
  transport.
\newblock {\em Europhys. Lett.}, 63:472--475, 2003.

\bibitem{Rauch91}
J.~Rauch.
\newblock {\em Partial Differential Equations}, volume 128 of {\em Graduate
  Texts in Math.}
\newblock Springer-Verlag, New York, 1991.

\bibitem{RoussetBBK}
F.~Rousset.
\newblock Solutions faibles de l'\'equation de {N}avier-{S}tokes des fluides
  compressibles.
\newblock {\em Ast\'erisque}, pages Exp.\ No.\ 1135, 565--584, 2017.
\newblock S\'eminaire Bourbaki, Vol.\ 2016/17.

\bibitem{Schuur1986}
P.~C. Schuur.
\newblock {\em Asymptotic analysis of soliton problems}, volume 1232 of {\em
  Lecture Notes in Mathematics}.
\newblock Springer-Verlag, Berlin, 1986.
\newblock An inverse scattering approach.

\bibitem{ST21}
V.~Schwinte and L.~Thomann.
\newblock Growth of {S}obolev norms for coupled {L}owest {L}andau {L}evel
  equations.
\newblock {\em Pure Appl. Anal.}, 3(1):189--222, 2021.

\bibitem{Serre97}
D.~Serre.
\newblock Solutions classiques globales des \'equations d'{E}uler pour un
  fluide parfait compressible.
\newblock {\em Ann. Inst. Fourier}, 47:139--153, 1997.

\bibitem{TsutsumiL2}
Y.~Tsutsumi.
\newblock {$L^2$}--solutions for nonlinear {S}chr\"odinger equations and
  nonlinear groups.
\newblock {\em Funkcial. Ekvac.}, 30(1):115--125, 1987.

\bibitem{VasseurYuInventiones}
A.~F. Vasseur and C.~Yu.
\newblock Existence of global weak solutions for 3{D} degenerate compressible
  {N}avier-{S}tokes equations.
\newblock {\em Invent. Math.}, 206(3):935--974, 2016.

\bibitem{VasseurYu}
A.~F. Vasseur and C.~Yu.
\newblock Global weak solutions to the compressible quantum {N}avier-{S}tokes
  equations with damping.
\newblock {\em SIAM J. Math. Anal.}, 48(2):1489--1511, 2016.

\bibitem{Vi03}
C.~Villani.
\newblock {\em Topics in optimal transportation}, volume~58 of {\em Graduate
  Studies in Mathematics}.
\newblock American Mathematical Society, Providence, RI, 2003.

\bibitem{WangZhang2019}
Z.-Q. Wang and C.~Zhang.
\newblock Convergence from power-law to logarithm-law in nonlinear scalar field
  equations.
\newblock {\em Arch. Ration. Mech. Anal.}, 231(1):45--61, 2019.

\bibitem{Weinstein83}
M.~I. Weinstein.
\newblock Nonlinear {S}chr\"odinger equations and sharp interpolation
  estimates.
\newblock {\em Comm. Math. Phys.}, 87(4):567--576, 1982/83.

\bibitem{yasue}
K.~Yasue.
\newblock Quantum mechanics of nonconservative systems.
\newblock {\em Annals Phys.}, 114(1-2):479--496, 1978.

\bibitem{Zlo10}
K.~G. Zloshchastiev.
\newblock Logarithmic nonlinearity in theories of quantum gravity: {O}rigin of
  time and observational consequences.
\newblock {\em Grav. Cosmol.}, 16:288--297, 2010.

\end{thebibliography}
\end{document}